\newtheorem{theorem}{Theorem}
\newtheorem{lemma}[theorem]{Lemma}
\newtheorem{prop}[theorem]{Proposition}
\newtheorem{obs}{Observation}
\newtheorem{cor}{Corollary}
\newtheorem{dfn}{Definition}
\newcommand{\Mod}[1]{\ (\text{mod}\ #1)}
\begin{document}

\title{Permutation graphs and unique games}
\author{ Monika Rosicka\thanks{%
Faculty of Mathematics, Physics and Informatics, University of Gda\'{n}sk,
80-952 Gda\'{n}sk, Poland; Institute of Theoretical Physics and
Astrophysics, and National Quantum Information Centre in Gda\'{n}sk, 81-824
Sopot, Poland. \texttt{rosamo@op.pl}} \and Simone Severini\thanks{%
Department of Computer Science, University College London, WC1E 6BT London,
United Kingdom. \texttt{s.severini@ucl.ac.uk}}}
\maketitle

\begin{abstract}
We study the value of unique games as a graph-theoretic parameter. This is
obtained by labeling edges with permutations. We describe the classical 
value of a game as well as give a necessary and
sufficient condition for the existence of an optimal assignment based on a
generalisation of permutation graphs and graph bundles. In considering some
special cases, we relate XOR games to \textsc{Edge Bipartization}, and
define an edge-labeling with permutations from Latin squares. 
\end{abstract}
 
\section{Introduction}

We shall work with undirected graphs $G=(V,E)$ without self-loops or
multiple edges. An edge between vertices $u$ and $v$ is denoted by $\{u,v\}$
or $uv$. A \emph{vertex-labeling}, $l:V(G)\mapsto S$, is an assignment of
elements of a given set $S$ to the vertices of a graph $G$. An \emph{%
edge-labeling}, $L:E(G)\mapsto S^{\prime }$, is an assignment of elements of
a given set $S^{\prime }$ to the edges of $G$. A \emph{permutation} of a set 
$S=\{x_{1},...,x_{n}\}$ is a bijective function $\pi :S\mapsto S$. As usual,
a permutation $\pi $ can be defined by a set of ordered pairs $\{(x_{1},\pi
(x_{1})),...,(x_{n},\pi (x_{n}))\}$. A \textit{fixed point} of a permutation 
$\pi $ is an element $x\in S$ such that $\pi (x)=x$. By $%
(x_{1}x_{2}...x_{n}) $, we denote a permutation $\pi $ such that $\pi
(x_{1})=x_{2},...,\pi (x_{n-1})=x_{n},\pi (x_{n})=x_{1}$. The set of all
permutations on $[n]=\{0,...,n-1\}$ is denoted by $S_{n}$. We consider
edge-labelings with permutations. For undirected graphs, we generally
consider labelings with transpositions only, \emph{i.e.}, permutations $\pi $
such that $\pi ^{-1}=\pi $. For directed graphs we may use any subset of $%
S_{n}$.

Let $G$ be a graph and $K:E(G)\mapsto S_{n}$ an edge-labeling assigning a
permutation $\pi _{i}:[n]\mapsto \lbrack n]$ to every edge $e_{i}\in E(G)$.
A \emph{contradiction} in a vertex-labeling $k:V(G)\mapsto \lbrack n]$ is an
edge $uv$ such that $\pi (k(u))\neq k(v)$, where $\pi $ is the permutation
assigned to $uv$. For a given graph $G$ and an edge-labeling $K:E(G)\mapsto
S_{n}$, we say that a vertex-labeling $k:V\mapsto \lbrack n]$ is \emph{%
consistent} if it gives no contradictions. In such a case, the edge-labeling 
$K$ is said to be \emph{vertex-proper}.


The \emph{contradiction number}, $\beta _{C}(G,K)$, is the minimum number of
contradictions for a given graph $G$ and an edge-labeling $K:E\mapsto S_{n}$%
, where the minimum is taken over all possible vertex-labelings: 
\begin{equation*}
\beta _{C}(G,K)=\min_{k}\left\vert \{e_{i}=uv\in E(G):\pi _{i}(k(u))\neq
k(v),\pi _{i}=K(e_{i})\}\right\vert .
\end{equation*}%
For a given labeled graph $(G,K:E\mapsto S_{n})$, we say that a vertex-labeling $K:V\mapsto
\lbrack n]$ is \emph{optimal} if it gives exactly $\beta _{C}(G,K)$
contradictions. It is clear that every consistent vertex-labeling is
optimal. For a given graph $G$ and an edge-labeling $K:E\mapsto S_{n}$, the 
\emph{assignment number}, $\beta _{C}^{\prime }(G,K)$, is the number of
consistent vertex-labelings possible for $G$ and $K$.

In \cite{qqq}, labeled graphs are used in the study of \emph{contextuality}. When investigating contextuality, we attempt to quantify how much the outcome of a measurement of a physical observable depends on the context in which it is measured. By a \emph{context}, we understand a set of observables which can be measured at the same time. By a \emph{contextual game}, we mean a scenario in which two players are asked to assign values $a$ and $b$ to certain randomly chosen variables $x\in X$ and $y\in Y$, respectively, under some requirements. Neither of the players knows which variable was chosen for the other one and they are not allowed to communicate with each other during the game. For each pair, $x, y$, of variables which may be chosen, we want the values $a$ and $b$ to satisfy certain constraints. If $a$ and $b$ satisfy the constraints, both players win, otherwise they both lose. The maximum probability of winning a given game with a classical strategy is called the \emph{classical value} of the game and denoted by $\omega$. Similarly, the \emph{quantum value} of the game, denoted by $\omega^*$, is the highest probability of winning the game using quantum resources, commonly entanglement shared between the players. 

A \emph{unique game} (see, for example, \cite{Tre}) is a contextual game in which the variables can take values from the set $[n]$ and the constraints are in the form $\pi_{xy}(a)=b,$ where $a$ and $b$ are values assigned to the variables $x$ and $y$, respectively, and $\pi$ is a permutation of $[n].$ 

Classically, the optimal strategy for any unique game is to deterministically assign values to all of the variables so as to satisfy as many of the constraints as possible. Every constraint is satisfied with probability either $0$ or $1$.
Quantum theory allows for strategies in which each constraint is satisfied with some probability $0\leq p_{xy}\leq 1$. In some cases, the quantum value of a game may be higher than its classical value, because the players share entanglement. 

A unique game with $n$ possible answers can be defined in terms of a graph $G$ with an edge-labeling $K\colon E(G)\mapsto S_n$. The set of variables corresponds to $V(G)$. Two vertices $x$ and $y$ of $G$ are adjacent if and only if we may ask about the values of both at the same time. The edge-labeling $K$ represents the constraints. If the answers for a given pair $x,y$ should satisfy $\pi_{xy}(a)=b,$ then $K(xy)=\pi_{xy}.$ The most common scenario is one where the sets $X$ and $Y$ of questions each player may be asked are disjoint. These games are represented by bipartite labeled graphs. However, single-player games on non-bipartite graphs are also considered. Thus, the classical value of a game can be written as follows:
\begin{center}
$\omega(G,K) = \max_{P \in C} \frac{1}{|E(G)|} \sum_{\{x,y\} \in E } V(a,b|x,y) P(a,b|x,y)$
\end{center}
where $V(a,b|x,y)$ is the indicator function, \emph{i.e.} $V(a,b|x,y)=1$ if $\pi_{xy}(a)=b$ and $V(a,b|x,y)=0$, otherwise. Hence, the quantum value of a game, denoted by $\omega^*(G,K)$, is defined in exactly the same way, but with the maximum taken over all strategies involving shared entanglement. For the purpose of this paper, we do need to go beyond this deliberately vague definition. 

It is easy to see that a classical strategy, or deterministic assignment of values to all variables in the game, is the same as a vertex-assignment of the labeled graph representing the game. Thus, the classical value of the game can be given in terms of the contradiction number of the labeled graph representing the game: 

\begin{center}
$\omega(G,K)=1-\frac{\beta_C(G,K)}{\left|E(G)\right|}.$ 
\end{center}

The simplest and best-known class of unique games are \textit{XOR games}, in which the variables can only take the values $0$ and $1.$ The constraints in those games are given in the form $a+b \equiv i \Mod{2}$, where $i\in\{0,1\}$. XOR games can be described in terms of signed graphs and thus the classical value of an XOR game is connected to the notion of balance in a signed graph.  

Labeled graph equivalence, defined in section \ref{equiv}, allows us to introduce an equivalence for unique games. Two unique games are considered equivalent if they are associated to equivalent labeled graphs. Since equivalence preserves the contradiction number of a labeled graph, it is clear that equivalent games have the same classical value. The quantum value is also preserved, as graph isomorphisms and switches represent the relabeling of the inputs $x,y$ and outputs $a,b$, respectively. Thus, equivalent games have the same classical and quantum values.

The permutation graph $KG=\overline{K_{n}}\bowtie ^{K }G$, which we describe in more detail in Section 2, was first introduced with the purpose of studying the contradiction number of labeled graphs $(G,K)$ and classical value of the corresponding games. However,  the structure and components of $KG$ also provide information about optimal quantum strategies. Let $(G,K\colon E(G)\mapsto S_n)$ be a connected labeled graph such that $KG$ is not a connected graph. If $K'\colon E(G)\mapsto S_d$, for $d<n$, is an edge-labeling of $G$, such that $K'$ is a component of $G$, then any strategy for the game defined by $(G,K')$ is also a valid strategy for $(G,K).$ It follows that $\omega^*(G,K)\geq\omega^*(G,K')$. However, in many cases, $\omega^*(G,K)$ is equal to the maximum value $\omega^*(G,K')$ over all labelings $K'$ such that $K'G$ is a component of $KG.$ The structure and components of the permutation graph $KG$ will most likely play an important role in future studies of unique games.

In this paper, we focus on the contradiction and assignment numbers of labeled graphs in general as well as graphs whose labels are given by specific classes of permutations. In Section 2, we introduce the permutation graph $KG$ obtained from a labeled graph $(G,K)$ and show how it can be used to study consistent labelings. In Section 3, we study the effects of some operations, such as deleting an edge or identifying two vertices, on the contradiction and assignment numbers of a labeled graph. In Section 4, we define an equivalence relation for labeled graphs as a generalized version of signed graph equivalence and show that if two labeled graphs are equivalent, then their contradiction and assignment numbers are equal. In sections 5 and 6 we focus on labelings with specific classes of permutations. While this has not immediate practical interest, it is a playground to introduce new mathematical notions and gain a better understanding of special cases. In particular, we consider, the set $L_n=\{\pi_i\in S_n: \pi_i(x) \equiv i-x \Mod{n}$ for $x, i\in [n]\}$, $x+y \equiv i \Mod{2}$, which corresponds to the class of \textit{GXOR games}, studied in \cite{qqq}, and $S_2$, which we compare to signed graphs. 

We begin with some observations on labeled trees and cycles that arguably give simplest cases.

\begin{obs}
For any tree all edge-labelings are vertex-proper.
\end{obs}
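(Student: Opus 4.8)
The plan is to construct an explicit consistent vertex-labeling by propagating values outward from an arbitrary root, exploiting the facts that a tree has no cycles and that each label $\pi\in S_n$ is a bijection. First I would fix any vertex $r\in V(T)$ as a root and assign it an arbitrary value $k(r)\in[n]$. Since $T$ is a tree, every other vertex $v$ is joined to $r$ by a unique path, so $v$ has a well-defined parent $u$ along that path, connected by a single edge $uv$ carrying a permutation $\pi=K(uv)$.

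Next I would define $k$ recursively by processing the vertices in nondecreasing order of their distance from $r$, for instance via a breadth-first traversal. For each non-root vertex $v$ with parent $u$, I set $k(v)$ to be the unique value in $[n]$ satisfying the constraint on the edge $uv$; because $\pi$ is a bijection, such a value exists and is uniquely determined once $k(u)$ is known. (Since the paper restricts undirected labelings to involutions, the orientation of the edge is immaterial, but even for a general permutation one simply applies $\pi$ or $\pi^{-1}$ according to the chosen orientation, both of which are well defined.) This produces a total labeling $k\colon V(T)\mapsto[n]$ in a single pass.

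It remains to check that $k$ is \emph{consistent}, i.e.\ that it produces no contradictions, and here the main point requiring care is well-definedness: the key observation is that in a rooted tree every edge is the parent edge of exactly one vertex, namely its endpoint farther from $r$. Hence each edge's constraint is imposed on, and satisfied by, precisely the step that assigns the label of that farther endpoint; no edge is ever revisited, so no previously fixed label can come into conflict. The acyclicity of $T$ is exactly what prevents the kind of constraint loop that could otherwise force an inconsistency, so there is no genuine obstacle: the construction goes through directly and yields $\beta_C(T,K)=0$, showing that every edge-labeling of a tree is vertex-proper.
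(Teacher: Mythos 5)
Your proposal is correct and follows essentially the same approach as the paper: both fix an arbitrary root, assign it an arbitrary value, and propagate labels outward along the unique paths of the tree (the paper phrases this as composing the permutations along each root-to-vertex path, which is exactly your breadth-first recursion unrolled). Your added remark on well-definedness — that each edge is the parent edge of exactly one vertex — is a fair explicit justification of a step the paper leaves implicit.
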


\begin{proof}
Let $T$ be a tree, and let $K:E(T)\mapsto S_n$ be an edge-labeling. Given the tree property, any two vertices $u,v\in V(T)$ are connected by exactly one path. For any path $P_t=(v_1,...,v_t)$, we can define a permutation $\pi_{P_t}=K(v_{t-1}v_t)...K(v_1v_2).$ Let $v$ be any vertex of $T.$ Assign the value $k(v)$ to the vertex $v.$ Then, for every $u\in V(T)-\{v\},$ let $k(u)=\pi_{P_t}(k(v)),$ where $P_t$ is the path connecting $u$ to $v.$ This assignment is consistent.
\end{proof}

It is clear that a graph with only one cycle (often called unicyclic) can have at most one
contradiction. Whether or not
there is a contradicion in the cycle can be determined through the
composition of all permutations assigned to the edges of the cycle. For a
given cycle $C_{t}=(v_{1},...,v_{t})$, we define a permutation $\pi
_{C_{t}}=K(e_{1})K(e_{2})...K(e_{t})$, where $e_{i}\cap e_{i+1}=\{v_{i}\}$,
for all $i$ and $v_{t}=v_{0}.$

\begin{theorem}
A cycle $C_{t}$ has a consistent vertex-labeling for a given edge-labeling $%
K $ if and only if $\pi _{C_{t}}$ has at least one fixed point.
\end{theorem}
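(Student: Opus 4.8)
The plan is to exploit the fact that a cycle is a spanning path plus one extra edge: deleting the single edge $e_1=\{v_t,v_1\}$ from $C_t$ leaves a path, which is a tree, so by the Observation every choice of starting value extends to a consistent labeling of that path. The only constraint that can fail is the one carried by the deleted edge, and I want to show this failure is governed precisely by whether $\pi_{C_t}$ fixes the starting value. Throughout I fix the orientation along the cycle and record, for each edge $e_i=\{v_{i-1},v_i\}$ (with $v_0=v_t$ and $\pi_i=K(e_i)$), the constraint $\pi_i(k(v_i))=k(v_{i-1})$; this is the convention under which the product $\pi_{C_t}=\pi_1\pi_2\cdots\pi_t$ reads off in the stated order.

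For the forward direction I would take a consistent labeling $k$, set $c=k(v_t)$, and propagate the constraints one edge at a time around the cycle. This yields $k(v_{t-1})=\pi_t(c)$, $k(v_{t-2})=\pi_{t-1}\pi_t(c)$, and in general $k(v_i)=\pi_{i+1}\cdots\pi_t(c)$. Applying the final constraint, carried by $e_1=\{v_t,v_1\}$, brings us back to $v_t$ and forces $\pi_1\pi_2\cdots\pi_t(c)=c$, i.e. $\pi_{C_t}(c)=c$, so $c$ is a fixed point.

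For the converse I would run the same propagation as a definition. Given a fixed point $c$ of $\pi_{C_t}$, set $k(v_t)=c$ and $k(v_i)=\pi_{i+1}\cdots\pi_t(c)$ for $i=t-1,\dots,1$. By construction each edge $e_t,e_{t-1},\dots,e_2$ satisfies its constraint, since $\pi_j(k(v_j))=\pi_j\pi_{j+1}\cdots\pi_t(c)=k(v_{j-1})$. The single remaining edge $e_1$ imposes exactly $\pi_1\cdots\pi_t(c)=\pi_{C_t}(c)=c$, which holds precisely because $c$ is fixed. Hence $k$ is consistent. This is just the tree-completion of the Observation applied to the spanning path, together with one check at the closing edge.

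The only genuinely delicate point is bookkeeping rather than mathematics: the direction in which one traverses an (undirected) edge and the order of composition must be pinned down so that the return condition reads $\pi_{C_t}(c)=c$ and not the condition for some inverse or conjugate permutation. This is harmless for the statement, since $\sigma$ and $\sigma^{-1}$ have identical fixed points, and any cyclic reordering $\pi_1\cdots\pi_t$ versus $\pi_j\cdots\pi_t\pi_1\cdots\pi_{j-1}$ is a conjugate and hence has the same number of fixed points. Consequently the existence of a fixed point is independent of the chosen starting vertex and orientation, so the criterion is well posed.
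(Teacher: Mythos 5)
Your proof is correct and follows the same propagate-around-the-cycle idea as the paper; in fact it is more complete, since the paper's own proof only establishes the forward direction (a consistent labeling forces $\pi_{C_t}(k(v_0))=k(v_0)$) and leaves the converse unargued. Your explicit construction of a consistent labeling from a fixed point via the spanning path, together with the remark that the fixed-point criterion is unaffected by inverting or cyclically conjugating the product of edge permutations (so the choice of starting vertex and orientation is immaterial), supplies exactly what the paper omits.
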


\begin{proof}
Suppose $k:V(C)\mapsto [n]$ is a consistent vertex-assignment of $C.$ For any $v_i\in C,$ $k(v_{i+1})=K(e_{i+1})(k(v_i))=K(e_1)K(e_2)...K(e_{i+1})(k(v_0)).$ Since $v_n=v_0,$ and therefore $k(v_t)=k(v_0),$ we have $\pi_c(k(v_0))=k(v_0).$  
\end{proof}

\begin{cor}
The number of fixed points of $\pi _{C_{t}}$ is equal to the number of
consistent vertex-labelings of $C_{t}.$
\end{cor}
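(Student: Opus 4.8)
The plan is to prove the corollary by exhibiting an explicit bijection between the set of fixed points of $\pi_{C_t}$ and the set of consistent vertex-labelings of $C_t$, so that the two sets have the same cardinality. The starting observation, already implicit in the proof of the preceding theorem, is that a consistent labeling of a cycle is rigid: it is completely determined by the value it assigns to any single reference vertex. I would fix the vertex $v_0$ and consider the map $\Phi$ that sends a consistent vertex-labeling $k$ to the value $k(v_0)\in[n]$.

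First I would show $\Phi$ is injective. If $k$ is consistent, then for each edge $e_{i+1}=v_iv_{i+1}$ the no-contradiction condition forces $k(v_{i+1})=K(e_{i+1})(k(v_i))$, so a straightforward induction along the path $v_0,v_1,\dots,v_{i}$ gives $k(v_i)=K(e_i)\cdots K(e_1)(k(v_0))$. Hence every value of $k$ is determined by $k(v_0)$, and two consistent labelings agreeing at $v_0$ are identical; this is exactly injectivity of $\Phi$. Next I would identify the image of $\Phi$ with the fixed-point set of $\pi_{C_t}$. The proof of the theorem already gives one inclusion: for any consistent $k$, using $v_t=v_0$ and $k(v_t)=k(v_0)$ together with the propagation formula yields $\pi_{C_t}(k(v_0))=k(v_0)$, so $\Phi(k)$ is a fixed point of $\pi_{C_t}$.

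For the reverse inclusion I would take an arbitrary fixed point $x$ of $\pi_{C_t}$ and construct a consistent labeling with $k(v_0)=x$ by propagation, setting $k(v_i)=K(e_i)\cdots K(e_1)(x)$. By construction every edge $e_1,\dots,e_{t-1}$ is satisfied, so the only edge whose consistency is not automatic is the closing edge $e_t=v_{t-1}v_0$; here one checks that satisfying $e_t$ is precisely the requirement $\pi_{C_t}(x)=x$, which holds by hypothesis. This shows every fixed point lies in the image of $\Phi$, and that $\Phi$ is therefore a bijection onto the fixed-point set. Since distinct fixed points yield labelings with distinct values at $v_0$, the correspondence is genuinely one-to-one, and counting both sides gives the claimed equality.

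The steps are essentially bookkeeping rather than deep argument, and I do not anticipate a serious obstacle. The one point requiring care is the treatment of the closing edge together with the composition-order convention for $\pi_{C_t}=K(e_1)K(e_2)\cdots K(e_t)$: I would make sure the propagation formula for $k(v_i)$ is aligned with the definition of $\pi_{C_t}$ and with the index convention $e_i\cap e_{i+1}=\{v_i\}$, $v_t=v_0$, so that the fixed-point condition emerges cleanly as the single constraint closing the cycle.
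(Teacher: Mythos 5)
Your proof is correct and follows exactly the argument the paper intends: the paper states this corollary without a separate proof, and your bijection via the value at $v_0$ (injectivity by propagation along the cycle, surjectivity onto the fixed-point set by checking the closing edge) is the natural completion of the propagation argument already used in the proofs of Observation 1 and the preceding theorem. Your closing remark about aligning the propagation formula with the composition order in $\pi_{C_t}=K(e_1)K(e_2)\cdots K(e_t)$ is well taken, since the paper's own notation in the theorem's proof is not entirely consistent on this point.
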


It follows that the number of contradictions in a given graph is at most the
number of cycles. It may, however, be greater than the number of cycles
containing contradictions.

\bigskip




\section{Permutation graphs}

To study the contradiction and assignment numbers in a given graph $G$ with
edge-labeling $K:E(G)\mapsto \Pi \subset S_{n}$, we define the graph $KG=%
\overline{K_{n}}\bowtie ^{K}G$. The \emph{permutation graph} $G\bowtie
^{\Pi }H$ of two graphs $G$ and $H$ is as follows. 

\begin{dfn}
Let $G$ and $H$ be two graphs and let $\Pi\colon E(H)\mapsto S_n$ 
(where $n=\left|V(G)\right|$) be a function assigning a permutation $\pi_{ij}$ 
of $V(G)$ to each edge $v_iv_j$ of $H$.

\begin{enumerate}
\item Let $G_1,...,G_n$ be copies of $G$. For any vertex $v\in V(G),$ we denote 
the copy of $u\in V(G)$ in $V_i=V(G_i)$ as $u_i$ and for any set of vertices 
$S\subseteq V(G),$ the copy of $S$ in $G_i$ is denoted by $S_i.$

\item Two vertices $u_i\in V_i$ and $w_j\in V_j$ (i.e. copies of $u$ in $V_i$ and 
$w$ in $V_j$, $i<j$) are adjacent in $G\bowtie^{\Pi}H$ if and only if $ij\in E(H)$
 and $\pi_{ij}(u)=w$, where $\pi_{ij}=\Pi(v_iv_j).$
\end{enumerate}
\end{dfn}

In \cite{LS}, this type of graph is referred to as a \emph{permutation graph over
a graph} $H$. Every edge-labeling $K:E\mapsto S_{n}$ of a graph $G$ gives rise to a
graph $KG=\overline{K_{n}}\bowtie^{K}G,$ which can also be defined as follows. 

\begin{dfn}
Let $(G,K\colon E(G)\mapsto S_n)$ be a labeled graph.

\begin{enumerate}
\item For every vertex $v_i\in V(G)$ take an independent set $V_i=\{v_{i0},...v_{in-1}\}.$

\item Two vertices $v_{ij}\in V_j, v_{st}\in V_s$ are adjacent in $KG$ if and only if 
$v_iv_s\in E(G)$ and $\pi_{is}(j)=t,$ where $\pi_{is}=K(v_iv_s).$
\end{enumerate}
\end{dfn}

This object is strictly linked to the notion of a
permutation voltage graph originally introduced in \cite{GT}, and it can be
seen as a generalization of permutation graphs and graph bundles \cite{MPS}. 

\begin{lemma}
Let $K^{\prime }$ be an edge-labeling of the graph $KG$ such that all edges
in $E(KG)$ corresponding to a certain $e\in E(G)$ are labeled with the same
permutation as $e$. The edge-labeling $K^{\prime }$ is vertex-proper.
\end{lemma}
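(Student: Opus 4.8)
The plan is to prove that $K'$ is vertex-proper by exhibiting a single explicit vertex-labeling of $KG$ that yields no contradictions; by definition this suffices. The natural candidate simply reads off the layer index of each vertex. I would define $k\colon V(KG)\mapsto [n]$ by $k(v_{ij})=j$, so that each copy $v_{ij}$ of $v_i$ receives the value $j$ indexing its position within the independent set $V_i$. Intuitively, this is the labeling that records, at every copy, ``which value of $v_i$ this copy stands for,'' and the whole point of the construction of $KG$ is that adjacency was defined exactly so that this reading is internally consistent.

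Next I would verify directly that $k$ produces no contradictions. By the second definition of $KG$, every edge has the form $v_{ij}v_{st}$ where $v_iv_s\in E(G)$ and $\pi_{is}(j)=t$ with $\pi_{is}=K(v_iv_s)$. Since $G$ has no multiple edges, this edge corresponds to the unique edge $e=v_iv_s$ of $G$, so the hypothesis on $K'$ gives $K'(v_{ij}v_{st})=K(e)=\pi_{is}$. The contradiction test on this edge then reads $\pi_{is}(k(v_{ij}))=\pi_{is}(j)$, which equals $t=k(v_{st})$ precisely by the adjacency condition defining $KG$. Hence no edge of $KG$ is a contradiction, $k$ is consistent, and $K'$ is vertex-proper.

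The one place the argument needs care — and essentially the only obstacle — is the orientation bookkeeping. A contradiction on an undirected edge $uw$ compares $\pi(k(u))$ with $k(w)$, so I must ensure that the endpoint playing the role of $u$ is the one in the ``source'' layer $V_i$ from which $\pi_{is}$ is read, consistently with the convention $i<s$ used in the permutation-graph definition. With that matching, traversing the edge from $V_i$ to $V_s$ makes the check above coincide exactly with the adjacency relation; without it one would be testing $\pi_{is}^{-1}$ instead. I would also note explicitly that the edge-to-edge correspondence between $E(KG)$ and $E(G)$ is well defined, so that $K'$ is forced to assign the single permutation $K(e)$ to every edge in the fibre over $e$, which is exactly what the hypothesis presupposes. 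Once these conventions are fixed, the verification in the previous paragraph is immediate and the proof is complete.
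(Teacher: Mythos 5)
Your proposal is correct and is essentially the paper's own proof: both exhibit the vertex-labeling $k(v_{ij})=j$ and observe that the adjacency condition $\pi_{is}(j)=t$ defining $KG$ makes it consistent by construction. Your added verification of the edge-to-edge correspondence and the orientation convention is more explicit than the paper's one-line argument but does not change the approach.
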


\begin{proof}
For every vertex $v_i\in V(G)$, let $v_{i0},...,v_{in-1}$ denote the corresponding vertices in $KG.$ Let $K'$ be an edge-labeling of $KG$ assigning the permutation $K(e)$ to all edges corresponding to $e.$ The vertex-assignment $k':V(KG)\mapsto[n],$ in which $k'(v_{ij})=j,$ is consistent. 
\end{proof}


\begin{figure}[h]
\begin{center}
\begin{picture}(300,150)

\put(30,50){\line(0,1){50}}
\put(10,72){(12)}
\put(30,100){\line(1,0){50}}
\put(45,103){(12)}
\put(80,50){\line(-1,0){50}}
\put(45,40){(02)}
\put(80,100){\line(0,-1){50}}
\put(82,72){(01)}
\put(30,50){\circle*{3}}
\put(25,45){$v_0$}
\put(30,100){\circle*{3}}
\put(25,102){$v_3$}
\put(80,50){\circle*{3}}
\put(82,45){$v_1$}
\put(80,100){\circle*{3}}
\put(82,102){$v_2$}

\qbezier(190,100)(190,100)(190,50)
\qbezier(190,100)(190,100)(240,100)
\qbezier(240,50)(240,50)(150,10)
\qbezier(240,100)(240,100)(260,30)

\put(190,50){\circle*{3}}
\put(192,50){$v_{00}$}
\put(190,100){\circle*{3}}
\put(190,102){$v_{30}$}
\put(240,50){\circle*{3}}
\put(240,45){$v_{10}$}
\put(240,100){\circle*{3}}
\put(240,102){$v_{20}$}

\qbezier(170,30)(170,30)(150,140)
\qbezier(170,120)(170,120)(280,140)
\qbezier(260,120)(260,120)(240,50)
\qbezier(260,30)(260,30)(170,30)

\put(170,30){\circle*{3}}
\put(172,32){$v_{01}$}
\put(170,120){\circle*{3}}
\put(172,115){$v_{31}$}
\put(260,30){\circle*{3}}
\put(262,30){$v_{11}$}
\put(260,120){\circle*{3}}
\put(260,122){$v_{21}$}

\qbezier(150,10)(150,10)(170,120)
\qbezier(150,140)(150,140)(260,120)
\qbezier(280,140)(280,140)(280,10)
\qbezier(280,10)(280,10)(190,50)

\put(150,10){\circle*{3}}
\put(152,8){$v_{02}$}
\put(150,140){\circle*{3}}
\put(150,142){$v_{32}$}
\put(280,10){\circle*{3}}
\put(282,10){$v_{12}$}
\put(280,140){\circle*{3}}
\put(282,142){$v_{22}$}

\put(35,20){$G$}
\put(210,10){$KG$}

\end{picture}
\end{center}

\caption{The graphs $G$ and $KG$. The vertex-assignment $k:V(KG)\mapsto [3],$ 
where $k(v_{ij})=j$ is consistent for $(KG,K')$. There is no consistent
vertex-assignment for $(G,K)$.}
\label{fig:example}
\end{figure}
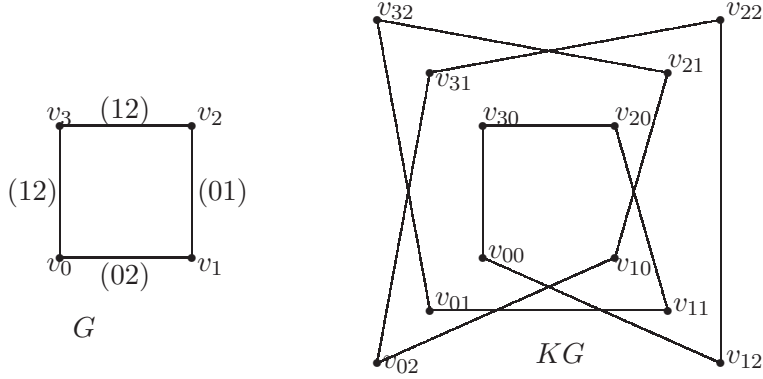

\begin{theorem}
The graph $G$ has a consistent vertex-labeling for a given edge-labeling $K$
if and only if $KG$ has a component isomorphic to $G$. If $G$ is connected
then the number of consistent vertex-labelings is equal to the number of
such components.
\end{theorem}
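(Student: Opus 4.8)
The plan is to exploit the covering structure of $KG$ over $G$. Define the projection $p\colon V(KG)\mapsto V(G)$ by $p(v_{ij})=v_i$. The first thing I would record is the key local property: for each edge $v_iv_s\in E(G)$ the vertex $v_{ij}$ has exactly one neighbour in the independent set $V_s$, namely $v_{s,\pi_{is}(j)}$, and it has no neighbour in any $V_s$ with $v_s$ not adjacent to $v_i$. Hence $\deg_{KG}(v_{ij})=\deg_G(v_i)$, and $p$ carries the neighbours of $v_{ij}$ bijectively onto the neighbours of $v_i$ in $G$. This single fact drives everything else.

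For the forward direction, suppose $k\colon V(G)\mapsto[n]$ is consistent and set $S_k=\{v_{i,k(v_i)}:v_i\in V(G)\}$, one copy of each vertex. Consistency says $\pi_{is}(k(v_i))=k(v_s)$ for every edge $v_iv_s$, which is exactly the adjacency condition, so $v_{i,k(v_i)}$ and $v_{s,k(v_s)}$ are adjacent in $KG$; thus $p$ restricts to a graph isomorphism from the subgraph induced on $S_k$ onto $G$. The crucial extra observation is that $S_k$ is closed under taking neighbours: each of the $\deg_G(v_i)$ neighbours of $v_{i,k(v_i)}$ has the form $v_{s,\pi_{is}(k(v_i))}=v_{s,k(v_s)}\in S_k$, so no edge of $KG$ leaves $S_k$. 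Therefore $S_k$ is a union of connected components of $KG$, and when $G$ is connected it is a single component isomorphic to $G$.

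For the converse, let $C$ be a component of $KG$ isomorphic to the connected graph $G$. Since $C$ is a component it is closed under neighbours, so the local property shows that whenever $v_i\in p(C)$ and $v_iv_s\in E(G)$ we also have $v_s\in p(C)$; connectedness of $G$ then forces $p(C)=V(G)$. Because $C\cong G$ we have $|V(C)|=|V(G)|$, and a surjection between finite sets of equal size is a bijection, so $C$ contains exactly one copy $v_{i,j_i}$ of each vertex $v_i$. Defining $k(v_i)=j_i$, the adjacency of $v_{i,j_i}$ and $v_{s,j_s}$ along each edge gives $\pi_{is}(j_i)=j_s$, that is $\pi_{is}(k(v_i))=k(v_s)$, so $k$ is consistent.

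Finally, the counting statement follows by checking that $k\mapsto S_k$ and $C\mapsto k_C$ are mutually inverse: the unique copy of $v_i$ in $S_k$ is $v_{i,k(v_i)}$, so reading a labeling off $S_k$ returns $k$, and $S_{k_C}$ is exactly the vertex set of $C$. Distinct labelings differ in some coordinate and hence produce components with different vertex sets, so this is a genuine bijection. The step I expect to be the main obstacle is the converse: one must rule out a component that is abstractly isomorphic to $G$ yet wraps around and contains two copies of some vertex, and the clean way to exclude this is precisely the cardinality argument $|V(C)|=|V(G)|$ together with surjectivity of $p|_C$, rather than any direct combinatorial manipulation. I would also note that for disconnected $G$ the word ``component'' must be read as ``union of components,'' since a single connected component of $KG$ can only be isomorphic to a connected $G$.
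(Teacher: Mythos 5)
Your proposal is correct and follows essentially the same route as the paper: both arguments rest on the local observation that a vertex of the fiber $V_i$ has exactly one neighbour in $V_s$ when $v_iv_s\in E(G)$ and none otherwise, use it to show that a component isomorphic to $G$ meets each fiber exactly once, and then pass back and forth between such components and consistent labelings. Your handling of the converse (ruling out a ``wrapped'' component via the cardinality of $p(C)$) and of the bijection underlying the counting claim is somewhat more explicit than the paper's, but the underlying ideas coincide.
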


\begin{proof}
Let $V_i=\{v_{i0},...,v_{in-1}\}$ denote the set of vertices in $KG$ corresponding to $v_i\in V(G).$ For any $V_i,V_j$ and vertex $u\in V_i,$ $\left|N_{V_j}(u)\right|=1$ if $v_j\in N_G(v_i)$ and $\left|N_{V_j}(u)\right|=0$ if $v_j\notin N_G(v_i).$ Therefore, for any component $H$ of $KG,$ if $\left|V_i\cap V(H)\right|=c,$ then $\left|V_j\cap V(H)\right|=c,$ for all $j$ such that $v_j\in N_G(v_i).$ If $G$ is a connected graph, then for any component $H$ of $KG$ $\left|V_i\cap V(H)\right|=\left|V_j\cap V(H)\right|$ for any $i$ and $j.$
Thus, a component isomorphic to $G$ must contain exactly one vertex corresponding to each $v\in V(G)$ and one edge corresponding to each $e\in E(G).$ By Lemma~1, this component has a consistent vertex-assignment. 
Now, let $k: V(G)\mapsto[n]$ be a consistent vertex-assignment of $G.$ For every $v\in V(G),$ $k(v)$ determines $k(u)$ for all $u\in N_G(v).$
For any $v_i\in V(G),$ there exists a vertex $v_{ik(v_i)}\in V_i.$ For this vertex $k'(v_{ik(v_i)})=k(v_i)$ and for all $u_{jh}\in N_{KG}(v_{ik(v_i)}),$ $k'(u_{jh})=k(u_j).$
Thus, each consistent assignment of $G$ defines a component of $KG$ isomorphic to $G.$
\end{proof}

\begin{cor}
If $G$ is a connected graph then any component of $KG$ with the same number
of vertices as $G$ is isomorphic to $G$.
\end{cor}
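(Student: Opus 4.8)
The plan is to read the structure of the component directly off the proof of the preceding theorem and then verify that the obvious vertex correspondence is a graph isomorphism. First I would invoke the structural fact established there: since $G$ is connected, for every component $H$ of $KG$ the quantity $\left|V_i\cap V(H)\right|$ takes the same value $c$ for all $i$. Consequently $\left|V(H)\right|=c\left|V(G)\right|$, so the hypothesis $\left|V(H)\right|=\left|V(G)\right|$ forces $c=1$; that is, $H$ contains exactly one vertex from each fiber $V_i$. Writing $v_{i,k_i}$ for that unique vertex, I would define $\phi\colon V(H)\mapsto V(G)$ by $\phi(v_{i,k_i})=v_i$, which is a bijection by construction.

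It then remains to check that $\phi$ preserves adjacency in both directions. The forward direction is immediate from the definition of $KG$: any edge of $H$ joins some $v_{i,k_i}$ to some $v_{s,k_s}$, and such an edge exists only when $v_iv_s\in E(G)$, so $\phi$ sends edges of $H$ to edges of $G$. For the backward direction I would use the observation recalled in the theorem's proof, namely that $\left|N_{V_s}(u)\right|=1$ whenever $v_s\in N_G(v_i)$ and $u\in V_i$: because $\pi_{is}=K(v_iv_s)$ is a permutation, each vertex of $V_i$ has precisely one neighbor in $V_s$, namely $v_{s,\pi_{is}(k_i)}$. Fixing an edge $v_iv_s\in E(G)$, the unique neighbor of $v_{i,k_i}$ in $V_s$ is adjacent to a vertex of $H$ and hence lies in $H\cap V_s=\{v_{s,k_s}\}$; therefore $\pi_{is}(k_i)=k_s$ and $v_{i,k_i}v_{s,k_s}\in E(H)$. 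Thus every edge of $G$ is realized between the chosen representatives, and $\phi$ is an isomorphism.

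The only genuinely delicate point is this backward implication: a priori a component could select one vertex per fiber without all the adjacencies of $G$ surviving. The permutation property of the labels is exactly what rules this out, since it guarantees that each representative already carries its full complement of $G$-neighbors inside $H$, so no edge of $G$ can be lost. Everything else is bookkeeping inherited from the theorem, and indeed the corollary may be viewed as the converse reading of the statement, proved there, that a component isomorphic to $G$ must meet each fiber in exactly one vertex.
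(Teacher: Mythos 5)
Your proof follows the same route as the paper's: use connectivity to show every fiber $V_i$ meets the component $H$ in the same number $c$ of vertices, deduce $c=1$ from the vertex count, and conclude that $H$ is isomorphic to $G$. The paper stops at asserting the isomorphism, whereas you additionally verify adjacency preservation in both directions (the backward direction via the unique-neighbor property of the permutation labels), which correctly fills in a step the paper leaves implicit.
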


\begin{proof}
Let $G$ be a connected graph and let $V_i=\{v_{i0},...,v_{in-1}\}$ denote the set of vertices in $KG$ corresponding to $v_i\in V(G).$ For any $V_i,V_j$ and vertex $u\in V_i,$ $\left|N_{V_j}(u)\right|=1$ if $v_j\in N_G(v_i)$ and $\left|N_{V_j}(u)\right|=0$ if $v_j\notin N_G(v_i).$ Therefore, for any pair $V_i,V_j$ and any component $H$ of $KG,$ $\left|H\cap V_i\right|=\left|H\cap V_j\right|.$ Thus, any component with $\left|V(G)\right|$ vertices contains exactly one vertex from each $V_i$ and it is isomorphic to $G.$
\end{proof}

Let us consider the case where $K:E\mapsto S_{3}$. 

\begin{theorem}
For any edge-labeling $K:E(G)\mapsto S_{3},$ any connected graph $G$ has
either $3,1,$ or $0$ consistent vertex-labelings.
\end{theorem}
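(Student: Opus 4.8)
The plan is to reduce the problem to counting components of $KG$ and then to a small arithmetic fact about partitions of $3$. By the preceding theorem, the number of consistent vertex-labelings of $(G,K)$ equals the number of components of $KG$ isomorphic to $G$, so it suffices to show that this count is never exactly $2$. Since $n=3$, the vertex set of $KG$ decomposes into the sets $V_i=\{v_{i0},v_{i1},v_{i2}\}$, one for each $v_i\in V(G)$, and each $V_i$ has exactly $3$ vertices. The components of $KG$ partition $V(KG)$, hence they partition each individual $V_i$.

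The structural input I would take directly from the corollary (and its proof) is that, because $G$ is connected, every component $H$ of $KG$ meets each $V_i$ in the same number of vertices; write $c_H:=|H\cap V_i|$, a value $\ge 1$ independent of $i$. A component is isomorphic to $G$ precisely when $c_H=1$, since such a component contains exactly one vertex from each $V_i$. Thus the number of consistent vertex-labelings equals the number of components $H$ with $c_H=1$.

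The punchline is then purely arithmetic, and this is where $n=3$ is essential. Fixing any index $i$, the components of $KG$ partition $V_i$, so $\sum_H c_H=|V_i|=3$ is a sum of positive integers, i.e.\ a partition of $3$. The sought quantity is the number of parts equal to $1$. The only partitions of $3$ into positive parts are $3$, $\;2+1$, and $\;1+1+1$, whose numbers of $1$-parts are $0$, $1$, and $3$ respectively; in particular no partition of $3$ has exactly two parts equal to $1$. This excludes $2$ and proves the claim. I do not expect a serious obstacle here: the only delicate points are citing cleanly that each component meets every $V_i$ equally (already contained in the corollary) and observing that $2$ cannot arise as a count of $1$-parts — a phenomenon special to $3$.

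As a self-contained sanity check one can bypass $KG$ entirely: fixing a spanning tree and a root value $c\in[3]$, propagation along the tree produces a candidate labeling, and each non-tree edge forces $c$ to be a fixed point of a conjugated permutation $\tau\in S_3$, so the consistent labelings correspond to the common fixed points in $[3]$ of these $\tau$'s. The same obstruction reappears: a permutation of a $3$-element set that fixes two points fixes the third, so any set of common fixed points has size $0$, $1$, or $3$. I would present the component argument as the main proof, since it reuses the machinery just developed.
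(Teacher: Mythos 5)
Your main argument is exactly the paper's proof: the paper likewise invokes the fact that every component $H$ of $KG$ satisfies $|V_i\cap V(H)|=|V_j\cap V(H)|$ for all $i,j$ and concludes that the number of components isomorphic to $G$ is $3$, $1$, or $0$; you merely make explicit the partition-of-$3$ arithmetic that the paper leaves implicit. The proposal is correct and takes essentially the same route.
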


\begin{proof}
For any component $H$ of $KG,$ $\left|V_i\cap V(H)\right|=\left|V_j\cap V(H)\right|,$ for all $V_i,V_j\subset V(KG).$ It follows that $KG$ can have either $3,1,$ or $0$ components isomorphic to $G.$
\end{proof}

For a given $K:E\mapsto S_{n},$ we say that a graph (or its subgraph) is 
\emph{good} if it has $n$ consistent vertex-labelings. If it has no
consistent vertex-labeling, we say that it is \emph{bad}. Otherwise, the
graph is \emph{ugly}.

\section{Operations on labeled graphs}

Obviously, for any graph $G$ and edge-labeling $K:E(G)\mapsto S_{n}$ if $%
\beta _{C}^{\prime }(G,K)>0$ then $\beta _{C}(G,K)=0$. The assignment and
contradiction numbers of various subgraphs of a given graph may also be
useful in calculating $\beta _{C}(G,K)$ and $\beta _{C}^{\prime }(G,K).$

\subsection{Subgraphs of labeled graphs}

For a labeled graph $(H,K_H)$ we consider a subgraph $G$ of $H$ with an edge-labeling
$K_G:E(G)\mapsto S_n$, where $K_G(e)=K_H(e)$ for all $e\in E(G).$ We compare the contradiction 
and assignment numbers of $(G,K_G)$ to those of $(H,K_H)$. This will help us study the 
effect of various graph operations on these parameters.

\begin{obs}
Let $H$ be a graph and $K_{H}$ an edge-labeling of $H$ with a set of
permutations. Let $G$ be a subgraph of $H$ and let the edge-labeling $K_{G}$
of $G$ be defined as $K_{G}(e)=K_{H}(e)$ for all $e\in E(G)$. Then $\beta
_{C}(G,K_{G})\leq \beta _{C}(H,K_{H}).$
\end{obs}

\begin{proof}
Let $k:V(H)\mapsto [n]$ be any vertex-assignment of $H$ and $k_G:V(G)\mapsto [n]$ a vertex assignment of $G$ such that $k_G(v)=k(v)$ for all $v\in V(G).$ It is clear that $k$ must have at least as many contradictions as $k_G.$ Thus, $\beta_C(G,K_G)\leq\beta_C(H,K_H).$
\end{proof}

\begin{obs}
Let $H$ be a connected graph and $K_{H}$ an edge-labeling of $H$ with a set
of permutations. Let $G$ be a subgraph of $H$ and let the edge-labeling $%
K_{G}$ of $G$ be defined as $K_{G}(e)=K_{H}(e)$ for all $e\in E(G)$. Then $%
\beta _{C}^{\prime }(G,K_{G})\geq \beta _{C}^{\prime }(H,K_{H})$.
\end{obs}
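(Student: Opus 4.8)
The plan is to exhibit an injection from the consistent vertex-labelings of $(H,K_H)$ into those of $(G,K_G)$, namely restriction of the labeling to $V(G)$; counting then yields $\beta_C'(G,K_G)\ge\beta_C'(H,K_H)$. Two ingredients are needed. The first is that restriction preserves consistency: if $k\colon V(H)\mapsto[n]$ is consistent, then for every edge $e=uv\in E(G)\subseteq E(H)$ we have $K_G(e)=K_H(e)$ and $k$ is contradiction-free on $e$, so $k|_{V(G)}$ is contradiction-free on $e$ as well; hence $k|_{V(G)}$ is a consistent labeling of $G$ (this is the pointwise statement underlying the previous Observation). The second is that on a \emph{connected} labeled graph a consistent labeling is determined by its value at any single vertex.

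First I would make the second ingredient precise, reusing the propagation argument from the tree Observation. Let $k$ be a consistent labeling of the connected graph $H$, fix $v\in V(H)$, and let $u\in V(H)$ be arbitrary. Choosing a path $P=(v=w_0,w_1,\dots,w_t=u)$ in $H$, consistency forces $k(w_{i+1})=K_H(w_iw_{i+1})\bigl(k(w_i)\bigr)$ edge by edge, so $k(u)=\pi_P\bigl(k(v)\bigr)$ with $\pi_P$ the composition of the permutations along $P$. Thus $k$ is entirely determined by the single value $k(v)$.

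Next I would define the restriction map $\rho\colon k\mapsto k|_{V(G)}$ on the consistent labelings of $(H,K_H)$; by the first ingredient it lands in the consistent labelings of $(G,K_G)$. To see that $\rho$ is injective, pick any $v\in V(G)$ and suppose $\rho(k_1)=\rho(k_2)$; then $k_1(v)=k_2(v)$, and since $H$ is connected the previous paragraph gives $k_1(u)=\pi_P\bigl(k_1(v)\bigr)=\pi_P\bigl(k_2(v)\bigr)=k_2(u)$ for every $u$, so $k_1=k_2$. An injection between finite sets yields the claimed inequality.

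I expect the injectivity of $\rho$ to be the only delicate point, and it is precisely where connectivity of $H$ enters: were $H$ disconnected, a consistent labeling would be pinned down only by one value per component, so two distinct labelings could agree throughout $V(G)$ yet differ on a component of $H$ not met by $G$, breaking injectivity. The degenerate case $V(G)=\emptyset$ is avoided by taking $G$ to be a nonempty subgraph.
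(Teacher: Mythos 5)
Your proof is correct and follows essentially the same route as the paper's: restrict each consistent labeling of $H$ to $V(G)$, note that restriction preserves consistency, and use connectivity of $H$ (a consistent labeling is determined by its value at a single vertex) to conclude that the restriction map is injective. Your write-up merely spells out the propagation-along-paths argument and the empty-subgraph caveat that the paper leaves implicit.
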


\begin{proof}
Let $k:V(H)\mapsto [n]$ be any consistent vertex-assignment of $H.$ The assignment $k_G:V(G)\mapsto [n]$ of $G$ such that $k_G(v)=k(v)$ for all $v\in V(G)$ must also be consistent. In a consistent vertex-assignment of a connected graph the value assigned to one vertex determines the values in all other vertices. Thus, for any consistent $k_G$ there is at most one $k$ such that $k_G(v)=k(v).$
\end{proof}

\begin{theorem}
For any graph $G,$ edge-labeling $K:E(G)\mapsto S_{n}$ and edge $e\in E(G)$,
we have%
\begin{equation*}
\beta _{C}(G,K)-1\leq \beta _{C}(G-e,K)\leq \beta _{C}(G,K).
\end{equation*}
\end{theorem}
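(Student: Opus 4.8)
The plan is to establish the two inequalities separately, exploiting the fact that $G$ and $G-e$ share the same vertex set, so that any vertex-labeling of one serves verbatim as a vertex-labeling of the other. The only quantity that changes between the two labeled graphs is the single edge $e$.

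The upper bound $\beta_C(G-e,K)\leq\beta_C(G,K)$ is immediate from the subgraph Observation proved above: $G-e$ is a subgraph of $G$ whose edge-labeling is the restriction of $K$ (indeed $K(e')=K(e')$ for every $e'\in E(G-e)$), so applying that observation with the roles $H=G$ and ``$G$''$=G-e$ yields the claim directly, with no further work.

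For the lower bound, equivalently $\beta_C(G,K)\leq\beta_C(G-e,K)+1$, I would start from an optimal vertex-labeling $k'\colon V(G-e)\mapsto[n]$ of $G-e$, so that $k'$ produces exactly $\beta_C(G-e,K)$ contradictions on the edges of $G-e$. Since $V(G-e)=V(G)$, the same map $k'$ is a legitimate vertex-labeling of $G$. Every edge of $G$ other than $e$ lies in $G-e$ and is a contradiction under $k'$ on $G$ precisely when it is one on $G-e$, so these contribute exactly $\beta_C(G-e,K)$ to the count on $G$. The single remaining edge $e=uv$ contributes at most one further contradiction, namely $1$ if $\pi(k'(u))\neq k'(v)$ and $0$ otherwise, where $\pi=K(e)$. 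Hence $k'$ realizes at most $\beta_C(G-e,K)+1$ contradictions on $G$, and since $\beta_C(G,K)$ is by definition the minimum over all vertex-labelings of $G$, we obtain $\beta_C(G,K)\leq\beta_C(G-e,K)+1$, which rearranges to the desired lower bound.

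I do not expect any genuine obstacle here; the argument is purely a matter of careful bookkeeping. The one point to state cleanly is that inserting or deleting a single edge can change the contradiction count of a \emph{fixed} vertex-labeling by at most one, because $e$ is the unique edge in which $(G,K)$ and $(G-e,K)$ differ. Combining this with the subgraph Observation for the reverse direction closes both inequalities simultaneously.
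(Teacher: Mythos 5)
Your proof is correct and rests on the same underlying idea as the paper's: a fixed vertex-labeling works on both $G$ and $G-e$, and its contradiction count can differ by at most one since $e$ is the only edge in which they differ. If anything, your version is more carefully justified than the paper's, which asserts without argument that the restriction of an optimal labeling of $G$ remains optimal on $G-e$; your explicit extension of an optimal labeling of $G-e$ back to $G$ supplies exactly the step the paper leaves implicit.
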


\begin{proof}
If the edge $e$ is a contradiction in some optimal vertex-assignment $k:V(G)\mapsto[n],$ then in $G-e$ the assignment $k$ is still optimal but has $\beta_C(G,K)-1$ contradictions.
Otherwise every optimal vertex-assignment of $G$ is also an optimal assignment of $G-e$ and $\beta_C(G,K)=\beta_C(G-e,K).$
\end{proof}

\begin{obs}
For any graph $G,$ edge-labeling $K:E(G)\mapsto S_{n}$ and edge $e\in E(G)$,
we have%
\begin{equation*}
\beta _{C}^{\prime }(G-e,K)\geq \beta _{C}^{\prime }(G,K).
\end{equation*}
\end{obs}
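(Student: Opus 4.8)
The plan is to exhibit a set inclusion between the consistent vertex-labelings of $(G,K)$ and those of $(G-e,K)$ and then compare cardinalities, mirroring the strategy of Observation~4 but without needing any connectivity hypothesis.

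First I would observe that deleting an edge deletes no vertex, so $V(G-e)=V(G)$. Consequently a vertex-labeling of $G$ and a vertex-labeling of $G-e$ are literally the same kind of object, namely a function $V(G)\mapsto[n]$. This is the key structural difference from the general subgraph situation of Observation~4, where the smaller graph may have strictly fewer vertices and the restriction map fails to be injective unless one invokes connectivity; here no such difficulty arises.

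Next I would take an arbitrary consistent vertex-labeling $k$ of $(G,K)$. By definition $k$ produces no contradiction on any edge of $G$; since $E(G-e)=E(G)\setminus\{e\}\subseteq E(G)$ and $K$ assigns to each surviving edge the same permutation, $k$ produces no contradiction on any edge of $G-e$ either. Hence $k$ is a consistent vertex-labeling of $(G-e,K)$, and the set of consistent labelings of $(G,K)$ is contained in the set of consistent labelings of $(G-e,K)$.

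Finally, since $\beta_C'(G,K)$ and $\beta_C'(G-e,K)$ are exactly the cardinalities of these two sets, the inclusion immediately yields $\beta_C'(G-e,K)\geq\beta_C'(G,K)$. I expect no genuine obstacle here: because the vertex sets coincide, the inclusion map is the identity and is trivially injective, so the counting step is immediate and no analogue of the connectivity argument used in Observation~4 is needed.
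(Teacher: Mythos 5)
Your proof is correct and takes essentially the same route as the paper, which simply notes that any consistent vertex-assignment of $G$ remains consistent for $G-e$ and concludes by comparing counts. Your additional remark that $V(G-e)=V(G)$, so the inclusion of consistent labelings is literally an identity map and no connectivity hypothesis is needed, makes explicit a point the paper leaves implicit.
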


\begin{proof}
It follows from the above theorem that any consistent vertex-assignment of $G$ remains consistent for $G-e.$
\end{proof}

\subsubsection{Identifying two vertices of a labeled graph}

Let $(G,K_G)$ be a labeled graph and let $v_1$ and $v_2$ be two vertices of $G$. By $H=G_{v_1=v_2}$ we denote the graph obtained by identifying $v_1$ and $v_2$.
The vertices $v_1$ and $v_2$ are replaced by a new vertex $v$, that is, $\ V(H)=(V(G)-\{v_{1},v_{2}\})\cup \{v\}$. 
The edge set of $H$ is defined as follows.

\begin{enumerate}

\item $N_H(v)=(N_G(v_1)\cup N_G(v_2)) - \{v_1,v_2\};$

\item $N_H(u)=N_G(u)\cup\{v\} - \{u,v\}$ for $u\in N_H(v);$

\item $N_H(u)=N_G(u)$ for $u\notin N_H(v).$

\end{enumerate}

Let $K_{H}\colon E(H)\mapsto S_{n}$ be an inherited edge-labeling, i.e., an edge-labeling of $H$ in which: 

\begin{enumerate}
\item $K_{H}(u_{1}u_{2})=K_G(u_{1}u_{2})$ for $u_1,u_2\in V(H)-\{v\}$; 

\item If $u\in N_H(v),$ then $K_H(uv)=K_G(uv_i),$ where $uv_i\in E(G).$

\end{enumerate}

\begin{theorem}
\label{Tidcontr}
For any labeled graph $(G,K_G)$ and pair of vertices $v_{1},v_{2}\in V(G),$ the contradiction number of $(H,K_H)$, where $H=G_{v_1=v_2}$ and $K_H$ is the inherited edge-labeling, satisfies the following inequalities:
\begin{equation*}
\beta _{C}(G,K_{G})-1\leq \beta _{C}(H,K_{H})\leq \beta _{C}(G,K_{G})+\min
\{\deg _{G}v_{1},\deg _{G}v_{2}\}.
\end{equation*}
\end{theorem}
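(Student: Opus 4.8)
The plan is to prove the two inequalities separately by transferring vertex-assignments between $(G,K_G)$ and $(H,K_H)$, exploiting the fact that the two graphs differ only in that $v_1,v_2$ have been merged into a single vertex $v$ while the remaining vertices and their incident labels are preserved by the definition of the inherited labeling.

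For the lower bound $\beta_C(G,K_G)-1\leq\beta_C(H,K_H)$, I would start from an optimal vertex-assignment $k_H\colon V(H)\mapsto[n]$ achieving $\beta_C(H,K_H)$ contradictions, and lift it to an assignment $k_G$ of $G$ by setting $k_G(u)=k_H(u)$ for every $u\in V(G)-\{v_1,v_2\}$ and $k_G(v_1)=k_G(v_2)=k_H(v)$. Because each neighbor $u\in N_H(v)$ inherits its label from a single edge $uv_i\in E(G)$, every edge of $H$ incident to $v$ corresponds to exactly one edge of $G$ incident to $v_1$ or $v_2$, and that edge is a contradiction in $k_G$ if and only if the corresponding edge was a contradiction in $k_H$. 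All edges not incident to $v$ behave identically in the two graphs. Hence $k_G$ has exactly $\beta_C(H,K_H)$ contradictions \emph{among the edges that survive in $H$}; the only edges of $G$ not accounted for are those (if any) joining $v_1$ and $v_2$ directly, of which there is at most one. Therefore $\beta_C(G,K_G)\leq\beta_C(H,K_H)+1$, which is the desired lower bound.

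For the upper bound $\beta_C(H,K_H)\leq\beta_C(G,K_G)+\min\{\deg_G v_1,\deg_G v_2\}$, I would go the other direction. Take an optimal assignment $k_G\colon V(G)\mapsto[n]$ realizing $\beta_C(G,K_G)$, and assume without loss of generality that $\deg_G v_1\leq\deg_G v_2$. Define $k_H$ on $H$ by keeping $k_H(u)=k_G(u)$ for $u\neq v$ and setting $k_H(v)=k_G(v_2)$. Every edge of $H$ not incident to $v$ is a contradiction in $k_H$ exactly when it was one in $k_G$. Among the edges incident to $v$, those inherited from edges at $v_2$ retain their contradiction status because $k_H(v)=k_G(v_2)$; the edges inherited from $v_1$ may now become contradictions, but there are at most $\deg_G v_1=\min\{\deg_G v_1,\deg_G v_2\}$ of them. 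So $k_H$ has at most $\beta_C(G,K_G)+\min\{\deg_G v_1,\deg_G v_2\}$ contradictions, and since $\beta_C(H,K_H)$ is a minimum over all assignments, the inequality follows.

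The main obstacle, and the point needing the most care, is the bookkeeping around the identification-induced edge collisions. When $N_G(v_1)$ and $N_G(v_2)$ overlap, a common neighbor $u$ contributes two parallel edges $uv_1,uv_2$ in $G$ that both map to edges $uv$ in $H$; since $H$ has no multiple edges, the construction of $K_H$ selects one representative via the clause $K_H(uv)=K_G(uv_i)$, so I must verify that the correspondence between edges of $H$ incident to $v$ and edges of $G$ incident to $\{v_1,v_2\}$ is well-defined and that no surviving edge is double-counted or lost. I would also confirm that the edge $v_1v_2$ itself, if present, simply disappears under identification and accounts for the ``$-1$'' in the lower bound. Once this correspondence is pinned down, both inequalities reduce to the clean local arguments above.
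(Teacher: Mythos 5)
Your upper bound is sound and is essentially the paper's argument: push an optimal assignment of $G$ down to $H$ by giving $v$ the value of the higher-degree vertex, so that edges inherited from that vertex keep their status and only the (at most $\min\{\deg_G v_1,\deg_G v_2\}$) edges inherited from the other vertex can become new contradictions. The genuine gap is in the lower bound, and it sits exactly at the point you flag as ``the main obstacle'' and then defer. When $u\in N_G(v_1)\cap N_G(v_2)$, the two edges $uv_1,uv_2$ of $G$ collapse to the single edge $uv$ of $H$, whose label is inherited from only one of them; the other edge of $G$ corresponds to no edge of $H$ at all. So your claim that the only unaccounted-for edge is $v_1v_2$ is false, and lifting an optimal assignment of $H$ only yields $\beta_C(G,K_G)\leq\beta_C(H,K_H)+1+\left|N_G(v_1)\cap N_G(v_2)\right|$.

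This is not repairable bookkeeping. Take $G=K_{2,3}$ with parts $\{v_1,v_2\}$ and $\{u_1,u_2,u_3\}$, $n=3$, and involutive labels $K(u_1v_1)=K(u_1v_2)=id$, $K(u_2v_1)=(12)$, $K(u_2v_2)=(01)$, $K(u_3v_1)=(01)$, $K(u_3v_2)=(12)$. Writing $\pi_i=K(u_iv_1)$ and $\sigma_i=K(u_iv_2)$, vertex $u_i$ can satisfy both of its edges under an assignment with $k(v_1)=a$, $k(v_2)=b$ iff $b=\sigma_i\pi_i(a)$; here $\sigma_1\pi_1=id$, $\sigma_2\pi_2\colon x\mapsto x+1$ and $\sigma_3\pi_3\colon x\mapsto x+2$ (mod $3$), so for every $(a,b)$ at least two of the $u_i$ contribute a contradiction and $\beta_C(G,K_G)=2$. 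But $H$ is the star $K_{1,3}$, a tree, so $\beta_C(H,K_H)=0$, and the claimed inequality $\beta_C(G,K_G)-1\leq\beta_C(H,K_H)$ fails. Hence no completion of your argument can succeed without an added hypothesis such as $N_G(v_1)\cap N_G(v_2)=\emptyset$ (which does hold in the paper's subsequent application, Theorem~\ref{Tidass}, where $v_1$ and $v_2$ lie in different components). For what it is worth, your choice of direction for the lower bound --- lifting from $H$ to $G$ --- is the logically correct one; the paper's own proof only maps assignments from $G$ to $H$ and never establishes that every assignment of $H$ incurs at least $\beta_C(G,K_G)-1$ contradictions, so it does not surface this problem.
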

\begin{proof} 
Assume without loss of generality that $\deg_G(v_1)\geq\deg_G(v_2)$. For a given vertex-assignment $k\colon V(G)\mapsto [n]$, we define a vertex-assignment $k'\colon V(H)\mapsto [n]$, where $k'(v)=k(v_1)$ and $k'(u)=k(u)$ for $u\in V(H)-\{v\}.$
 
If there exists an optimal vertex-assignment $k$ of $(G,K)$ such that $k(v_1)=k(v_2),$ then $k'$ is an optimal assignment in which $k'(v)=k(v_1)=k(v_2).$ Consequently, if $v_1v_2$ is a contradiction in $(G,K_G),$ then $\beta_C(H,K_H)=\beta_C(G,K_G)-1$ and, if $v_1v_2$ is not a contradiction, then $\beta_C(H,K_H)=\beta_C(G,K_G).$

If $k$ is an optimal vertex-assignment of $(G,K_G)$ in which $k(v_1)\neq k(v_2),$ then $k'$ is a vertex-assignment of $(H,K_H)$ with no more than $\beta_C(G,K_G)-1$ and at most $\beta_C(G,K_G)+\deg_G(v_2)$ contradictions. It follows that $\beta _{C}(G,K_{G})-1\leq \beta _{C}(H,K_{H})\leq \beta _{C}(G,K_{G})+\min\{\deg _{G}v_{1},$$\deg _{G}v_{2}\}$.


\end{proof}

The arguments used in the proof of Theorem \ref{Tidcontr} also imply that if $\beta_C'(G,K_G)>0$, then the assignment number of $(H,K_H)$ is equal to the number of consistent vertex-assignments $k$ of $(G,K_G)$ such that $k(v_{1})=k(v_{2}).$


We now consider labeled graphs $(G,K_G)$ which are not connected. Let $G_1$ and $G_2$ denote two subgraphs of $G$ such that $E(V(G_1),V(G_2))=\emptyset$ and let $K_1\colon E(G_1)\mapsto S_n$ and $K_2\colon E(G_2)\mapsto S_n$ be edge labelings of $G_1$ and $G_2,$ respectively, in which $K_i(e)=K_G(e)$ for all edges $e$ of $G_i$.

\begin{theorem}
\label{Tidass}
Let $(G,K_G)$ be a disconnected labeled graph and let $v_1\in V(G_1)$ and $v_2\in V(G_2)$ be two vertices of $G$ not connected by a path. If $H=G_{v_1=v_2}$ and $K_H$ is the inherited edge-labeling of $H$, then the following inequalities hold:

\begin{equation*}
\beta _{C}^{\prime }(G_{1},K_{1})+\beta _{C}^{\prime }(G_{2},K_{2})-n\leq
\beta _{C}^{\prime }(H,K_{H})\leq \min \{\beta _{C}^{\prime}(G_{1},K_{1}),\beta _{C}^{\prime }(G_{2},K_{2})\}.
\end{equation*}

\end{theorem}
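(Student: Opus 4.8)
The plan is to reduce the count $\beta_C'(H,K_H)$ to a sum over the value placed on the merged vertex, and then compare it with the two factor counts by an elementary inequality. First I would establish the basic correspondence. Since $G_1$ and $G_2$ share no edges and $v_1,v_2$ lie in different components of $G$, identifying them into a single vertex $v$ creates no edge between the images of $V(G_1)$ and $V(G_2)$ other than through $v$, and the inherited labeling $K_H$ simply places on the edges incident to $v$ the permutations that $K_G$ placed on the edges incident to $v_1$ and $v_2$. Consequently a vertex-labeling $k'\colon V(H)\mapsto[n]$ is consistent if and only if its restriction to $V(G_1)$ (with $v_1\mapsto k'(v)$) is a consistent labeling of $(G_1,K_1)$ and its restriction to $V(G_2)$ (with $v_2\mapsto k'(v)$) is a consistent labeling of $(G_2,K_2)$. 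In other words, consistent labelings of $H$ are exactly the pairs $(k_1,k_2)$ of consistent labelings of the two parts that agree on the shared value, $k_1(v_1)=k_2(v_2)$.

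Next I would stratify this correspondence by the common value $a=k'(v)\in[n]$. Writing $p_a$ for the number of consistent labelings of $(G_1,K_1)$ with $k_1(v_1)=a$ and $q_a$ for the number of consistent labelings of $(G_2,K_2)$ with $k_2(v_2)=a$, the correspondence gives
\[
\beta_C'(H,K_H)=\sum_{a\in[n]}p_aq_a,\qquad \beta_C'(G_1,K_1)=\sum_{a\in[n]}p_a,\qquad \beta_C'(G_2,K_2)=\sum_{a\in[n]}q_a .
\]
The crucial structural input is the connectivity of the two parts: by the earlier observation that in a consistent labeling of a connected graph the value of one vertex determines all the others, fixing $k_1(v_1)=a$ leaves at most one consistent labeling of $(G_1,K_1)$, so $p_a\in\{0,1\}$, and likewise $q_a\in\{0,1\}$.

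With $p_a,q_a\in\{0,1\}$ both bounds become routine. For the upper bound, each $q_a\le 1$ gives $p_aq_a\le p_a$, hence $\beta_C'(H,K_H)\le\sum_a p_a=\beta_C'(G_1,K_1)$; symmetrically $\beta_C'(H,K_H)\le\beta_C'(G_2,K_2)$, and together these yield the minimum. For the lower bound I would use the identity
\[
\sum_{a\in[n]}p_aq_a-\sum_{a\in[n]}p_a-\sum_{a\in[n]}q_a+n=\sum_{a\in[n]}(p_a-1)(q_a-1),
\]
whose right-hand side is a sum of products of non-positive factors and is therefore non-negative; rearranging gives $\beta_C'(H,K_H)\ge\beta_C'(G_1,K_1)+\beta_C'(G_2,K_2)-n$.

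The main obstacle I anticipate is the first step rather than the arithmetic: one must check carefully that the inherited labeling at the merged vertex records exactly the conjunction of the constraints at $v_1$ and at $v_2$ and introduces no spurious constraint, which is precisely where the hypotheses $E(V(G_1),V(G_2))=\emptyset$ and ``$v_1,v_2$ not joined by a path'' are used. It is also worth flagging that the reduction $p_a,q_a\in\{0,1\}$ relies on connectivity of the two parts; without it (for instance if a part carries several free components) the counts $p_a,q_a$ can exceed $1$ and the upper bound can fail, so this hypothesis should be made explicit.
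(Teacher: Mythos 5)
Your proof is correct and follows essentially the same route as the paper's: your indicator counts $p_a,q_a\in\{0,1\}$ are the characteristic functions of the paper's sets $A_1,A_2$ of values realizable at the identified vertex, and your identity $\sum_{a}(p_a-1)(q_a-1)\geq 0$ is exactly the paper's inclusion--exclusion step $\left|A_1\cap A_2\right|=\left|A_1\right|+\left|A_2\right|-\left|A_1\cup A_2\right|\geq \left|A_1\right|+\left|A_2\right|-n$. Your closing remark that connectivity of $G_1$ and $G_2$ is needed to get $p_a,q_a\leq 1$ is well taken; the paper relies on this implicitly when it asserts that a labeled graph has at most one consistent assignment with a prescribed value at a given vertex.
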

\begin{proof}
Observation 3 implies that $\beta'_C(H,K_H)\leq\beta'_C(G_1,K_1)$ and $\beta'_C(H,K_H)\leq\beta'_C(G_2,K_2)$. It follows that $\beta'_C(H,K_H)\leq\min\{\beta'_C(G_1,K_1),\beta'_C(G_2,K_2)\}.$

Let us note that for a given vertex $u$ and $t\in [n]$ a labeled graph has at most one consistent vertex-assignment $k$ such that $k(u)=t.$ 

Now let $A_i$ for $i\in \{1,2\}$ denote the set of values $t\in [n]$ for which consistent vertex-assignments $k_i^t\colon V(G_i)\mapsto [n]$ such that $k_i^t(v)=t.$ The assignment $k^t\colon V(H)\mapsto [n]$ such that $k^t(v)=t$, for $t\in [n]$, exists if and only if $t\in A_1\cap A_2$. It follows that, $\beta'_C(H,K_H)=\left|A_1\cap A_2\right|$.

It is clear that $\left|A_1\cup A_2\right|\leq n$ and that $\left|A_1\cap A_2\right|=\left|A_1\right|+\left|A_2\right|-\left|A_1\cup A_2\right|.$ Thus, $\beta'_C(H,K_H)=\beta'_C(G_1,K_1)+\beta'_C(G_2,K_2)-\left|A_1\cup A_2\right|$, and therefore $$\beta'_C(H,K_H)\geq\beta'_C(G_1,K_1)+\beta'_C(G_2,K_2)-n$$
\end{proof}

Since $\beta_C'(G,K)>0$ implies $\beta_C(G,K)=0,$ we have the following corollary.

\begin{cor}
Let $G$ be a disconnected graph with components $G_1$ and $G_2$ and, let $K_1$ and $K_2$ be edge-labelings of $G_1$ and $G_2$, respectively, $H=G_{v_1=v_2}$ for $v_1\in V(G_1), v_2\in V(G_2)$ and let $K_H$ be the inherited edge-labeling of $G$. 
If $\beta _{C}^{\prime }(G_{1},K_{1})+\beta _{C}^{\prime }(G_{2},K_{2})>n$, then $\beta _{C}(H,K_{H})=0$. 
\end{cor}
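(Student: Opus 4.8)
The plan is to derive this directly from the lower bound established in Theorem~\ref{Tidass}, combined with the elementary fact recorded just before the corollary that a positive assignment number forces the contradiction number to vanish. No new machinery is needed; the corollary is essentially a repackaging of the left-hand inequality.

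First I would invoke the lower bound of Theorem~\ref{Tidass}, namely $\beta_C'(H,K_H)\geq \beta_C'(G_1,K_1)+\beta_C'(G_2,K_2)-n$. Under the hypothesis $\beta_C'(G_1,K_1)+\beta_C'(G_2,K_2)>n$, the right-hand side is strictly positive, so $\beta_C'(H,K_H)>0$. In other words, $(H,K_H)$ admits at least one consistent vertex-labeling. Concretely, this consistent labeling is obtained by choosing a value $t\in A_1\cap A_2$ (in the notation of the proof of Theorem~\ref{Tidass}), which is nonempty precisely because $|A_1|+|A_2|>n$ forces the two subsets of $[n]$ to overlap.

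Finally I would apply the observation that a positive assignment number implies a contradiction number of zero: since a consistent vertex-labeling gives no contradictions by definition, the minimum over all vertex-labelings is zero, and hence $\beta_C(H,K_H)=0$. There is no genuine obstacle here, as the substantive work is already carried by the lower bound of Theorem~\ref{Tidass}; the only thing to be careful about is that the hypothesis is a strict inequality, which is exactly what is required to make the lower bound strictly positive and thereby guarantee the existence of the overlapping value $t$.
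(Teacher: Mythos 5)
Your proposal is correct and matches the paper's own argument exactly: apply the lower bound $\beta_C'(H,K_H)\geq \beta_C'(G_1,K_1)+\beta_C'(G_2,K_2)-n$ from Theorem~\ref{Tidass}, note that the hypothesis makes this strictly positive, and conclude $\beta_C(H,K_H)=0$ from the fact that a positive assignment number forces the contradiction number to vanish. The extra remark about the overlap $A_1\cap A_2$ is consistent with the proof of Theorem~\ref{Tidass} but not needed here.
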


\begin{proof}
If $\beta'_C(G_1,K_1)+\beta'_C(G_2,K_2)>n,$ then $0<\beta'_C(G_1,K_1)+\beta'_C(G_2,K_2)-n\leq\beta'_C(H,K_H)$ and therefore $\beta_C(H,K_H)=0.$
\end{proof}

\section{Labeled graph equivalence}
\label{equiv}

A signed graph $\Sigma $ consists of a graph $G$ and a labeling $\hat{K}%
:E(G)\mapsto \{+,-\}.$ We say that a cycle of $\Sigma $ is \emph{balanced} if
it contains an even number of negative ($"-"$) edges. A signed graph is 
\emph{in balance} if all of its cycles are balanced. It is easy to see that
this corresponds directly to an edge-labeling $K:E(G)\mapsto S_{2}$, where $%
K(e)=id$ if $\hat{K}(e)=+$ and $K(e)=(01)$ otherwise. A signed graph $\Sigma
=(G,\hat{K})$ is in balance if and only if $\beta _{C}(G,K)=0$. Two signed
graphs $\Sigma _{1}$ and $\Sigma _{2}$ are said to be \emph{equivalent} if
they have the same underlying graph $G$ and the same set of balanced cycles.
The \emph{switching operation} is defined by changing the signs $\hat{K}(e)$
of all edges incident to a certain vertex $v\in V(G).$ Two signed graphs are
equivalent if and only if one can be obtained from the other by some
isomorphism of the underlying graphs and a sequence of switches. We can
define a similar equivalence for labeled graphs. Since different labelings
of the same graph $G$, such that $\beta _{C}$ and $\beta _{C}^{\prime }$ of
each cycle are the same, do not necessarily need to have the same $\beta _{C}
$ or $\beta _{C}^{\prime }$ for the whole graph, an equivalence based on
that would be meaningless. Instead, we consider two labeled graphs $%
(G_{1},K_{1})$ and $(G_{2},K_{2})$ to be equivalent if one can be obtained
from the other one by 
an isomorphism between $G_{1}$ and $G_{2},$ 
replacing a directed edge $uv$ labeled with $\pi $ with an edge $vu$ labeled with 
$\pi^{-1}$, 
and switching operations $s(v,\sigma )$ defined as follows. 
For any graph $G$ and edge-labeling $K:E(G)\mapsto S_{n}$, let $v\in V(G)$ be any
vertex of $G$ and let $\sigma $ be any permutation of $[n]$. 
For every vertex $u\in N_{G}(v)$: 

\begin{enumerate}
\item if $uv\in E,$ we replace $K(uv)=\pi$ with $K'(uv)=\sigma\pi$,

\item if $vu\in E,$ we replace $K(vu)=\pi$ with $K'(vu)=\pi\sigma^{-1}$.
\end{enumerate}

Here, $N_{G}(v)$ is the open neighborhood of $v$ in $G$. 


\begin{theorem}
For any given $G_1, G_2, K_1:E(G_1)\mapsto S_n, K_2:E(G_2)\mapsto S_n$ the
labeled graphs $(G_1,K_1)$ and $(G_2,K_2)$ are equivalent if and only if
there exists an isomorphism $F:V(K_1G_1) \mapsto V(K_2G_2)$ such that 
for every $v_i\in V(G_1)$ there exists a $u_i\in V(G_2)$ such that 
$F(\{v_{i0},...,v_{in-1}\})=\{u_{i0},...,u_{in-1}\}$
\end{theorem}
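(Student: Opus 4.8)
The plan is to prove both directions by matching each generating operation of the equivalence to an isomorphism of permutation graphs that carries fibers to fibers, where I call $V_i=\{v_{i0},\dots,v_{i,n-1}\}$ the \emph{fiber} over the base vertex $v_i$, and call an isomorphism \emph{fiber-respecting} if it maps each $V_i$ onto a whole fiber of the target. Throughout I will use the structural fact established earlier, namely that for adjacent $v_i,v_s$ each vertex of $V_i$ has exactly one neighbor in $V_s$ (a perfect matching given by the edge label), and no neighbor in $V_s$ when $v_iv_s\notin E$.

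For the forward implication it suffices to check the three generators, since a composition of fiber-respecting isomorphisms is again one. A labeled-graph isomorphism $\psi\colon G_1\to G_2$ lifts to $F(v_{ij})=\psi(v_i)_j$, which preserves adjacency because $\psi$ preserves edge labels. Reversing an edge $uv$ with label $\pi$ to $vu$ with label $\pi^{-1}$ leaves $KG$ literally unchanged, since $v_{uj}\sim v_{vt}\iff\pi(j)=t\iff\pi^{-1}(t)=j$, so here $F$ is the identity. Finally, a switch $s(v,\sigma)$ is realized by the map permuting the single fiber $V_v$ by $\sigma$ and fixing every other vertex: on an edge $uv$ its label becomes $\sigma\pi$, and the check $\sigma\pi(a)=\sigma(b)\iff\pi(a)=b$ confirms adjacency is preserved; on an edge $vu$ the label becomes $\pi\sigma^{-1}$ and the analogous check works. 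Composing these lifts yields the desired $F$.

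For the reverse implication, start from a fiber-respecting $F$. By hypothesis it carries each $V_i$ onto a fiber of $K_2G_2$, hence induces a bijection $\phi$ on base vertices. Two base vertices $v_i,v_s$ are adjacent in $G_1$ iff some vertex of $V_i$ has a neighbor in $V_s$, a property preserved by $F$, so $\phi$ is a graph isomorphism $G_1\to G_2$. Applying the isomorphism generator, I may transport $K_2$ along $\phi$ and so assume $G_1=G_2=G$, that $\phi$ is the identity, and that $F$ restricts to a permutation $\sigma_i\in S_n$ of each fiber, $F(v_{ij})=v_{i\sigma_i(j)}$. Since edge reversal does not change the permutation graph, I may also apply reversals to orient every edge $v_iv_s$ consistently, say with tail $v_i$ and head $v_s$, without disturbing $F$. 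Now fix such an edge with $\pi^1=K_1(v_iv_s)$ and $\pi^2=K_2(v_iv_s)$: the adjacency $v_{ij}\sim v_{st}\iff\pi^1(j)=t$ must correspond under $F$ to $\pi^2(\sigma_i(j))=\sigma_s(t)$, and substituting $t=\pi^1(j)$ forces the identity $\pi^2=\sigma_s\,\pi^1\,\sigma_i^{-1}$ for every edge. Reading the switching rules, switching at the head $v_s$ multiplies on the left by $\sigma_s$ and switching at the tail $v_i$ multiplies on the right by $\sigma_i^{-1}$; performing $s(v_i,\sigma_i)$ at every vertex therefore carries $K_1$ to exactly $K_2$, and chaining the isomorphism, the reversals, and these switches gives the equivalence.

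The step I expect to be the main obstacle is the bookkeeping of the reverse direction: confirming that $F$ decomposes as a base isomorphism followed by per-fiber permutations, and then that the single assignment $v_i\mapsto\sigma_i$ serves as switching data \emph{simultaneously} for all edges. The identity $\pi^2=\sigma_s\,\pi^1\,\sigma_i^{-1}$ is forced edge by edge, so the only genuine point is global consistency; this holds because switches at distinct endpoints of an edge act by left and right multiplication respectively and hence commute, so the combined effect on each edge is unambiguous regardless of the order in which the switches are applied.
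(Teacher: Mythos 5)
Your proof is correct and follows essentially the same strategy as the paper's: lift each generator (label-preserving isomorphism, edge reversal, switch $s(v,\sigma)$) to a fiber-respecting isomorphism of the permutation graphs, and conversely decompose a fiber-respecting $F$ into a base isomorphism composed with per-fiber permutations that are realized by switches. If anything, your converse direction is more complete than the paper's, which merely asserts the decomposition $F=\hat f\circ\hat s_1\circ\cdots\circ\hat s_t$ without deriving the edge-wise relation $\pi^2=\sigma_s\,\pi^1\,\sigma_i^{-1}$ or verifying that the resulting switches actually carry $K_1$ to $K_2$.
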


{\begin{proof}
First note that if $\pi(x)=y,$ then $\pi^{-1}(y)=x.$ It folows that changing
a directed edge $uv\in E(G_1)$ labeled with a permutation $K_1(uv)=\pi$ to $%
vu$ labeled with $\pi^{-1}$ results in $(G_2,K_2)$ identical to $(G_1,K_1)$.

Suppose $(G_2,K_2)$ can be obtained from $(G_1,K_1)$ by some isomorphism 
$f:V(G_1)\mapsto V(G_2)$ and a series of switches $s_1,...,s_t$ where 
$s_k=s(v_k,\sigma_k).$ We can define functions $\hat{f},%
\hat{s_1},...,\hat{s_t}:V(K_1G_1)\mapsto V(K_2G_2)$ as follows: $\hat{f}%
(v_{ij})= v_{f(i)j}$ and

\begin{equation*}
\hat{s_k}(v_{ij}) =\begin{cases}
v_{ij}, & i\neq k;
\\
v_{k\sigma_k(j)}, & i=k.
\end{cases}
\end{equation*}

It is easy to see that the function $F=\hat{f}\circ\hat{s_1}\circ ...\circ%
\hat{s_t}$ is an isomorphism from $K_1G_1$ to $K_2G_2$ and that for every 
$v_i\in V(G_1)$ there exists a $u_i\in V(G_2)$ such that 
$F(\{v_{i0},...,v_{in-1}\})=\{u_{i0},...,u_{in-1}\}$

Now, let $F:V(K_1G_1)\mapsto V(K_2G_2)$ be an isomorphism such that $F(V_i)=U_j,$ where 
$V_i=\{v_{i0},...,v_{in-1}\}\subset V(K_1G_1)$ and $U_i=\{u_{i0},...,u_{in-1}\}\subset V(K_2G_2)$.
This isomorphism can be presented as $F=\hat{f}%
\circ\hat{s_1}\circ ...\circ\hat{s_t}$ such that $\hat{f}(v_{ij})= v_{i'j}$
and

\begin{equation*}
\hat{s_k}(v_{ij}) =\begin{cases}
v_{ij}, & i\neq k;
\\v_{kj'}, & i=k;
\end{cases}
\end{equation*}

where $i'=f(i)$ for some isomorphism $f:G_1\mapsto G_2$ and $j'=\sigma_k(j)
$ for some permutation $\sigma_k\in S_n.$
\end{proof}
}

It follows that equivalent labeled graphs must have the same contradiction
and assignment numbers. All equivalent labelings of a given graph $G$ must
also have an optimal vertex-labeling with the same set of contradictions.

\section{Labeling with $S_2$}

Labeling with $S_{2}$ is equivalent to the problem of balance in a signed
graph. In \cite{Har} Harary proved the following result.

\begin{theorem}[Harary \cite{Har}]
\label{THar}
A signed graph is balanced if and only if its set of vertices can be partitioned 
into two disjoint subsets in such a way that each positive edge joins two vertices 
in the same subset while each negative edge joins two vertices from different subsets.
\end{theorem}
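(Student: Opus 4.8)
The plan is to prove Harary's theorem in both directions, establishing the equivalence between balance and the existence of a bipartition (a \emph{Harary bipartition}) separating vertices according to edge signs.

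First I would prove the easy direction: if such a partition $V(G)=V_1\cup V_2$ exists, then the signed graph is balanced. The key idea is to translate the sign labeling into the $S_2$ edge-labeling $K$ as described in the text ($K(e)=\mathrm{id}$ for $+$, $K(e)=(01)$ for $-$) and exhibit an explicit consistent vertex-labeling. The natural choice is $k(u)=0$ for $u\in V_1$ and $k(u)=1$ for $u\in V_2$. I would then verify that this labeling produces no contradictions: along a positive edge $uv$, both endpoints lie in the same subset, so $k(u)=k(v)$ and the identity permutation is satisfied; along a negative edge, the endpoints lie in different subsets, so $k(u)\neq k(v)$ and the transposition $(01)$ is satisfied. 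Since $\beta_C(G,K)=0$, the graph is balanced by the correspondence stated before Theorem~\ref{THar}.

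For the converse, I would assume the signed graph is balanced, i.e.\ $\beta_C(G,K)=0$, and construct the partition. It suffices to work on each connected component separately, so I would assume $G$ is connected. Since $\beta_C(G,K)=0$ there is a consistent vertex-labeling $k:V(G)\mapsto[2]$, and I would define $V_1=k^{-1}(0)$ and $V_2=k^{-1}(1)$. Consistency means that for every positive edge $uv$ we have $k(u)=k(v)$ (so $u,v$ lie in the same part) and for every negative edge $k(u)\neq k(v)$ (so $u,v$ lie in different parts), which is exactly the required property. This direction can equally be phrased using the cycle characterization already available: by Theorem~2, balance of the full graph reduces to balance of its cycles, and a consistent labeling exists on a connected graph precisely when one can propagate a value along a spanning tree and find no cycle contradiction.

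I expect the main subtlety, rather than a genuine obstacle, to be the careful handling of the translation between the $\{+,-\}$ language of signed graphs and the $S_2$ edge-labeling language in which the machinery of this paper is phrased; once that dictionary is fixed, both directions are short. A secondary point worth stating cleanly is the reduction to connected components in the converse: the partition is chosen independently on each component, and the empty-part and isolated-vertex cases must be allowed so that the bipartition is always well defined. Everything else is the routine verification that ``same part'' matches positive edges and ``different parts'' matches negative edges, which the consistent vertex-labeling encodes directly.
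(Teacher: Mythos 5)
The paper gives no proof of this statement at all: it is quoted as a classical result and attributed to Harary \cite{Har}, so there is nothing internal to compare your argument against. Judged on its own, your plan has the right shape, and the forward direction is fine: a Harary bipartition yields the consistent labeling $k(u)=0$ on $V_1$, $k(u)=1$ on $V_2$, and one can then conclude balance either directly (walking around any cycle, the value flips exactly at negative edges and must return to its starting value, so the number of negative edges is even) or via Theorem~2 applied to each cycle.

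The converse, however, is where the content lives, and as written it is circular. You open it with ``assume the signed graph is balanced, i.e.\ $\beta_C(G,K)=0$'' --- but ``balanced'' is defined in the paper as \emph{every cycle has an even number of negative edges}, and the sentence ``$\Sigma$ is in balance if and only if $\beta_C(G,K)=0$'' that precedes Theorem~\ref{THar} is itself asserted without proof and is, up to the trivial dictionary between $[2]$-labelings and bipartitions, exactly Harary's theorem. You cannot take it as a black box. Your fallback --- propagate along a spanning tree --- is the correct idea, but it is only named, not executed, and the appeal to Theorem~2 does not supply it (Theorem~2 concerns a standalone cycle $C_t$, not the reduction of a general graph to its cycles). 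To close the gap you must: fix a spanning tree of each component, define $k$ by propagation from a root (Observation~1 guarantees consistency on the tree), and then verify that every non-tree edge $uv$ is satisfied because its fundamental cycle --- the tree path from $u$ to $v$ together with $uv$ --- is balanced by hypothesis, so the parity of negative edges on the tree path forces $k(u)$ and $k(v)$ to agree or disagree exactly as the sign of $uv$ requires. That verification is the whole theorem; without it the converse is an assertion, not a proof.
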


It follows that a signed graph is \emph{in balance} if and only if the corresponding 
labeled graph has a consistent vertex-assignment.

Edge-labelings with $S_{2}$ correspond to a class of unique games 
known as XOR games, in which the value of each
variable can be either $0$ or $1$ and the constraints are given in the form
of the XOR of those values, \emph{i.e.} the constraint is satisfied if and
only if $x+y \equiv i \Mod{2}$, where $i$ is either $0$ (\emph{i.e.} $\pi =id$%
) or $1$ ($\pi =(01)$). It is easy to see that for the edge-labeling $%
K:E\mapsto S_{n}$ of any graph $G$ such that $K(e)=id$ for every $e\in E,$ $%
\beta _{C}(G,K)=0,$ $\beta _{C}^{\prime }(G,K)=n$ and the only consistent
vertex-assignments are those which assign the same value to every vertex.
Let us consider the edge-labeling $K:E\mapsto S_{2}$ in which $K(e)=(01)$
for all $e\in E$. 

\begin{prop}
The edge-labeling $K:E\mapsto \{(01)\}\subset S_{2}$ (\emph{i.e.} a labeling
using only the permutation $\pi =(01)$) of a given graph $G\neq \overline{%
K_{n}}$ is vertex-proper if and only if $G$ is bipartite.
\end{prop}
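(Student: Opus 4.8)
The plan is to show that the labeling $K(e)=(01)$ for every edge is vertex-proper precisely when $G$ is bipartite, by exhibiting the correspondence between consistent vertex-labelings and proper $2$-colorings. First I would observe that, since each value lies in $[2]=\{0,1\}$ and every edge carries the transposition $(01)$, the constraint on an edge $uv$ is $(01)(k(u))=k(v)$, which simply says $k(u)\neq k(v)$. Thus a consistent vertex-labeling $k\colon V(G)\mapsto\{0,1\}$ is exactly an assignment of two colors to the vertices in which adjacent vertices always receive different colors, i.e.\ a proper $2$-coloring of $G$.

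With this reformulation in hand, the claim reduces to the classical fact that a graph admits a proper $2$-coloring if and only if it is bipartite. I would argue the two directions directly. If $G$ is bipartite with parts $V_0$ and $V_1$, setting $k(v)=0$ for $v\in V_0$ and $k(v)=1$ for $v\in V_1$ gives a consistent assignment, since every edge joins the two parts and hence has endpoints of differing values; so $K$ is vertex-proper. Conversely, if $K$ is vertex-proper, take any consistent $k$ and define $V_0=k^{-1}(0)$ and $V_1=k^{-1}(1)$; consistency forces every edge to run between $V_0$ and $V_1$, so $(V_0,V_1)$ is a bipartition of $G$ and $G$ is bipartite. This is essentially an instance of Theorem~\ref{THar}: here every edge is negative, so balance (equivalently, the existence of a consistent vertex-assignment) is exactly the bipartite condition.

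The role of the hypothesis $G\neq\overline{K_n}$ is worth isolating, and I expect it to be the only subtle point. If $G$ has no edges, then the labeling is vacuously vertex-proper regardless of whether $G$ is bipartite in the intended sense, so the equivalence as stated could fail on the edgeless graph; requiring $G\neq\overline{K_n}$ ensures $G$ has at least one edge, so that the ``only if'' direction has genuine content and the bipartition is nontrivial. I would mention explicitly that for any graph with at least one edge the argument above applies verbatim, so the edge-case exclusion is exactly what makes the biconditional clean. The main obstacle is thus not the coloring argument, which is routine, but stating the edge-case caveat carefully so that the reduction $(01)(k(u))=k(v)\iff k(u)\neq k(v)$ translates cleanly into the bipartite characterization.
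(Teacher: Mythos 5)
Your proposal is correct and follows essentially the same route as the paper: identify consistent assignments under the all-$(01)$ labeling with proper $2$-colorings (equivalently, a partition into two independent sets) and invoke the standard characterization of bipartiteness in both directions. The extra discussion of the $G\neq\overline{K_n}$ hypothesis is harmless but not needed for the argument itself.
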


\begin{proof}
Let $k:V\mapsto[2]$ be a consistent vertex assignment. Let $A=\{v\in V:c(v)=0\}$ and $B=\{v\in V:k(v)=1\}.$ The two sets are independent and $A\cup B=V.$ Thus $G$ is a bipartite graph.
Let $G$ be a bipartite graph. The vertex set of $G$ can be paritioned into two independent subsets $A$ and $B.$ The vertex assignments $k$ and $k'$ such that $k(v)=0$ if $v\in A$ and $k(v)=1$ if $v\in B$ and $k'(v)=0$ if $v\in B$ and $k'(v)=1$ if $v\in A$ are the only consistent vertex-assignments.
\end{proof}

The next fact follows directly from Theorem 8.

\begin{prop}
Let $K:E\mapsto \{(ab)\}\subset S_{n}$ for $n\geq 3$ be an edge labeling of
a non-bipartite graph $G\neq \overline{K_{n}}$ in which $K(e)=(ab)$ for all $%
e\in E.$ The only consistent vertex-labelings are $k:V\mapsto \{c\}\subset
\lbrack n],$ where $c$ is a fixed point of the permutation $(ab)$. If $G$ is
bipartite, a consistent vertex-labeling using $a$ and $b$ exists.
\end{prop}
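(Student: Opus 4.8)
The plan is to reduce the single-transposition labeling $K\equiv(ab)$ to the already-understood case $K\equiv(01)$ and then read off the consistent labelings directly. Since $n\geq 3$, the transposition $(ab)$ has a nonempty fixed-point set $F=[n]\setminus\{a,b\}$. First I would pick any $\sigma\in S_n$ with $\sigma(a)=0$ and $\sigma(b)=1$ and apply the switching operation $s(u,\sigma)$ at \emph{every} vertex $u\in V(G)$. A short check of the two switch rules shows that performing $s(\cdot,\sigma)$ at both endpoints of an edge replaces its label $\pi$ by the conjugate $\sigma\pi\sigma^{-1}$; hence every label $(ab)$ becomes $(\sigma(a)\,\sigma(b))=(01)$. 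By the equivalence theorem (Theorem~8) the labeled graph $(G,(ab))$ is then equivalent to $(G,(01))$, so their consistent vertex-labelings correspond under $k\mapsto\sigma\circ k$ and fixed points of $(ab)$ correspond to fixed points of $(01)$. Note the preceding proposition is stated over $S_2$, where there are no fixed points, so the genuinely new content for $n\geq 3$ is exactly the constant fixed-point labelings; for that reason I would carry out the core argument directly on $(G,(ab))$ to keep the bookkeeping transparent.

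That core is a one-line propagation argument. Let $k$ be any consistent vertex-labeling, so $(ab)(k(u))=k(v)$ for every edge $uv$. If some vertex $u$ carries a fixed-point value $k(u)=c\in F$, then $k(v)=(ab)(c)=c$ for each neighbor $v$ of $u$; if instead $k(u)\in\{a,b\}$, then each neighbor is forced to the partner value, so $k(v)\in\{a,b\}$ as well. Thus the property ``$k$ takes a value in $F$'' and the property ``$k$ takes a value in $\{a,b\}$'' each propagate across edges, and since $G$ is connected exactly one of them holds globally: either $k$ is constant equal to a single fixed point $c\in F$, or $k$ takes values only in $\{a,b\}$ and alternates across every edge, i.e.\ it is a proper $2$-coloring of $G$ with colors $a,b$.

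To finish I would split on the bipartiteness of $G$. If $G$ is non-bipartite it admits no proper $2$-coloring, so the second alternative is impossible and every consistent labeling has the form $k\equiv c$ with $c\in F$; conversely each constant map to a fixed point trivially satisfies every constraint, giving exactly the claimed family. If $G$ is bipartite, fix a bipartition $V=A\cup B$ and set $k(v)=a$ on $A$ and $k(v)=b$ on $B$; every edge then satisfies $(ab)(k(u))=k(v)$, producing the promised consistent labeling that uses both $a$ and $b$.

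The only delicate point is the implicit assumption that $G$ is connected, which the statement inherits from the preceding proposition: for a disconnected $G$ one could place a fixed-point value on a non-bipartite component while $2$-coloring a separate bipartite component, so ``the only consistent labelings are the constant fixed-point maps'' genuinely requires connectivity to force a single global alternative. Beyond that, the one identity to verify with care is the conjugation formula for the all-vertices switch; everything else is the routine propagation above. I expect the main obstacle to be purely expository — making the dichotomy ``$F$ versus $\{a,b\}$'' and its interaction with connectivity precise — rather than any real difficulty.
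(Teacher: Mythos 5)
Your argument is correct, and it is actually more of a proof than the paper supplies: the paper offers no argument at all for this proposition, merely remarking that it ``follows directly'' from an earlier theorem (the reference is to the equivalence/Harary material of the preceding sections). Your core dichotomy is exactly the right way to flesh that out: since $(ab)$ is an involution, the constraint $(ab)(k(u))=k(v)$ propagates ``$k(u)\in[n]\setminus\{a,b\}$'' to a constant fixed-point value on the whole component and propagates ``$k(u)\in\{a,b\}$'' to a proper $2$-colouring with colours $a,b$, so on a connected graph exactly one alternative holds globally, and non-bipartiteness kills the second. The switching detour is also correct --- performing $s(\cdot,\sigma)$ at both endpoints does conjugate the label to $\sigma(ab)\sigma^{-1}=(\sigma(a)\,\sigma(b))$ --- but, as you note yourself, it does not land in the $S_2$ setting of the preceding proposition (the image $(01)$ still lives in $S_n$ and has the fixed points $2,\dots,n-1$), so it is a redundant preamble rather than a genuine reduction; the direct propagation is what carries the proof. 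Your observation that connectivity is silently needed for the ``only consistent labelings are constant'' clause is a genuine and worthwhile caveat that the paper glosses over: for a disconnected graph one can use different fixed points, or mix a fixed-point component with a $2$-coloured bipartite component. The only cosmetic point is that the proposition's hypotheses are themselves slightly inconsistent ($G$ is declared non-bipartite and then a bipartite case is discussed); your reading of it as a statement about a general connected graph with at least one edge is the intended one.
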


What can we say about $\beta _{C}(G,(01))$, for a generic graph $G$? Let us
assume that $G$ is non-bipartite. Then, $\beta _{C}(G,(01))$ is the minimum
number of edges that we need to delete from $G$ to obtain a bipartite graph.
Computing $\beta _{C}(G,(01))$ is at least as hard as asking whether a
bipartite graph can be obtained by deleting at most $k$ edges from $G$. This
problem is called \textsc{Edge Bipartization} (or, equivalently,
(unweighted) \textsc{Minimum Uncut}). The problem is a way to measure how
close $G$ is to being bipartite. It is known to be MaxSNP-hard and can be
approximated to a factor of O$(\sqrt{\log n})$ in polynomial time, where $n$
is the total number of vertices (see \cite{ag}). Because of this point, we
can simplify the notation: the \emph{edge bipartization number} of a graph $G
$, $\beta _{C}^{2}(G)$, is the minimum number of edges that we need to
delete in order to obtain a bipartite graph. By the above definitions, $%
\beta _{C}(G,(01))=\beta _{C}^{2}(G)$. 

\section{Edge-labelings with Latin squares}

A \emph{Latin square} $L$ of order $n$ is an $n\times n$ array $%
\{l_{ij}\}_{n\times n}$ such that $l_{ij}\neq l_{is}$ and $l_{ij}\neq l_{tj}$
for $s\neq j$ and $t\neq i.$ A an $n\times n$ Latin square defines a set of $%
n$ permutations $\pi _{i}:[n]\mapsto \lbrack n].$ For each row in the array $%
\pi _{i}$ is defined by the set of pairs$\{(0,l_{i0}),...,(n-1,l_{in-1})\}$.
Let us consider a particular set $L_{n}=\{\pi _{0},...,\pi _{n-1}\}$ of
permutations where, for every $i\in \lbrack n]$ we have $\pi _{i}(x) \equiv i-x
\Mod{n}$. Note that $L_{n}$ corresponds to the Latin square in which $%
l_{ij} \equiv i-j \Mod{n}$, for all $i,j\in \lbrack n]$. In any set of permutations
defined by a Latin square, we have $\pi _{i}(x)\neq \pi _{j}(x)$ if $i\neq j$%
. Since every element $i\in \lbrack n]$ occurs exactly once in each column
of a given Latin square $L$, clearly every $i\in \lbrack n]$ is a fixed
point of exactly one permutation in the set defined by $L.$ If the set of
permutations contains the identity, no other permutation may have a fixed
point. Furthermore, if $n$ is odd, then each permutation in $L_{n}$ has
exactly one fixed point. 

\begin{obs}
For any edge-labeling $K:E\mapsto L_{n},$ a cycle of even length can have
either $0$ or $n$ consistent vertex-labelings.
\end{obs}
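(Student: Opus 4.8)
The plan is to count consistent vertex-labelings by counting fixed points, and then to exploit the fact that every permutation in $L_n$ is an affine reflection of $[n]$. By the corollary to the cycle theorem, the number of consistent vertex-labelings of a cycle $C_t$ under an edge-labeling $K$ equals the number of fixed points of the composite permutation $\pi_{C_t}=K(e_1)K(e_2)\cdots K(e_t)$. Hence it suffices to show that, when $t$ is even and the labels are drawn from $L_n$, the permutation $\pi_{C_t}$ has either $0$ or $n$ fixed points.

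The key structural step is to observe that each $\pi_a\in L_n$ is an affine map of $[n]$, namely $x\mapsto -x+a \Mod{n}$, and so has linear part $-1$. Under composition, linear parts multiply; therefore the composite of $t$ such maps has linear part $(-1)^t$. Since $t$ is even, $(-1)^t=1$, so $\pi_{C_t}$ is a \emph{pure translation}
\[
\pi_{C_t}(x)\equiv x+c \Mod{n}
\]
for some constant $c\in[n]$ (explicitly, $c$ is, up to sign, the alternating sum of the edge-labels, but its exact value will not matter).

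It then remains to count the fixed points of the translation $x\mapsto x+c$. If $c\equiv 0 \Mod{n}$, then $\pi_{C_t}$ is the identity and every element of $[n]$ is a fixed point, giving $n$ consistent vertex-labelings. If $c\not\equiv 0 \Mod{n}$, then $x+c\neq x$ for all $x\in[n]$, so there are no fixed points and hence no consistent vertex-labelings. This is exactly the claimed dichotomy.

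I do not expect a genuine obstacle: the whole argument hinges on the parity of $t$, which is precisely the hypothesis. The one point needing care is the bookkeeping of linear parts, since it is the evenness of $t$ that forces the coefficient of $x$ to be $+1$ (a translation) rather than $-1$ (a reflection, which for odd $n$ would generically contribute a single fixed point and break the dichotomy). Working with the affine description sidesteps any worry about the order in which the labels are composed, as only the product of the linear parts is relevant to whether $\pi_{C_t}$ is a translation.
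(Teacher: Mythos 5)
Your proof is correct and follows essentially the same route as the paper: the paper pairs consecutive edge-labels so that each pair composes to a translation $x\mapsto x+c$ and notes that translations are closed under composition and have $0$ or $n$ fixed points, which is exactly your observation that the linear parts $(-1)$ multiply to $(-1)^t=+1$ for even $t$. Your bookkeeping via the linear part of the affine maps is a slightly tidier packaging of the identical idea, and both arguments finish by invoking the fixed-point count of $\pi_{C_t}$.
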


\begin{proof}
Let $L'_n=\{\sigma_0,...,\sigma_{n-1}\}$ be a set of permutations such that $\sigma_i(x) \equiv i+x \Mod{n}.$ It is clear that for any two permutations $\pi_i,\pi_j\in L_n,$ $\pi_j\pi_i\in L'_n,$ since $\pi_j\pi_i(x)=j-(i-x) \equiv (j-i)+x \Mod{n}.$ 
Now, for any $\sigma_i,\sigma_j\in L'_n,$ we have $\sigma_j\sigma_i\in L'_n,$ since $\sigma_j\sigma_i(x)=j+(i+x) \equiv (i+j)+x \Mod{n}.$ It follows that $\sigma_k...\sigma_1\in L'_n$ for any finite $k.$
Now let $C_{2k}$ be any even cycle and let $K:E(C)\mapsto L_n$ be an edge-labeling of $C_{2k}.$ Let $\pi_C=\pi_{2k}...\pi_1,$ where $\pi_i$ is the permutation assigned to the $i$-th edge of $C_{2k}.$ Obviously, $\pi_C=\sigma_k...\sigma_1,$ where $\sigma_i=\pi_{2i}\pi_{2i-1}.$ Since $\sigma_i\in L'_n$ for $i=1,2,...,k,$ this implies that $\pi_C\in L'_n.$ The only permutation in $L'_n$ which has a fixed point is $\sigma_0=id.$ Thus, either $\pi_C=id$ and $C_{2k}$ has $n$ consistent vertex-assignments or $\pi_C$ has no fixed point and $C_{2k}$ has no consistent assignments.
\end{proof}

\begin{obs}
For any edge-labeling $K:E\mapsto L_{n},$ for $n\geq 3$ a cycle of odd
length has at most $2$ consistent vertex-labelings.
\end{obs}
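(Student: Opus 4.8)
The plan is to reduce the count of consistent vertex-labelings to a fixed-point count and then identify the overall cycle permutation $\pi_C$ explicitly. First I would reuse the algebraic facts established in the proof of the preceding observation: writing $L'_n=\{\sigma_0,\ldots,\sigma_{n-1}\}$ with $\sigma_i(x)\equiv i+x\Mod{n}$, the composition of any two permutations in $L_n$ lies in $L'_n$, and $L'_n$ is closed under composition. Hence for an odd cycle $C_{2k+1}$ with cycle permutation $\pi_C=\pi_{2k+1}\pi_{2k}\cdots\pi_1$, I would group the last $2k$ factors into a single element of $L'_n$, obtaining $\pi_C=\pi_{2k+1}\,\sigma$ for some $\sigma=\sigma_m\in L'_n$.

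Next, a direct computation shows that composing an $L_n$ permutation with an $L'_n$ permutation returns to $L_n$: since $\pi_i\sigma_m(x)=i-(m+x)\equiv (i-m)-x\Mod{n}$, we have $\pi_i\sigma_m=\pi_{i-m}\in L_n$. Therefore $\pi_C$ is itself of the form $\pi_j(x)\equiv j-x\Mod{n}$ for some $j\in[n]$. This is the crux of the argument: an odd-length composition lands back in the ``reflection'' family $L_n$ rather than in the ``translation'' family $L'_n$, in contrast to the even case treated just above.

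By the corollary to Theorem~1, the number of consistent vertex-labelings of $C_{2k+1}$ equals the number of fixed points of $\pi_C$. A fixed point $x$ satisfies $x\equiv j-x\Mod{n}$, i.e. $2x\equiv j\Mod{n}$. This linear congruence has at most $\gcd(2,n)\le 2$ solutions in $[n]$: exactly one when $n$ is odd, and either zero or two when $n$ is even. In every case the number of fixed points, and hence the number of consistent vertex-labelings, is at most two.

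I expect no genuine obstacle beyond bookkeeping the composition order correctly; once $\pi_C$ is pinned down to a reflection $x\mapsto j-x$, the fixed-point count reduces to the elementary observation that $2x\equiv j\Mod{n}$ has at most two roots. The one subtlety worth stating carefully is the reduction in the first two paragraphs, where the parity of the cycle length determines whether $\pi_C$ is a translation or a reflection.
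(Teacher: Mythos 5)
Your proof is correct and follows essentially the same route as the paper's: group the first $2k$ factors into a single element of $L'_n$, observe that composing $\pi_{2k+1}\in L_n$ with a translation lands back in $L_n$, and count fixed points of the resulting reflection. Your explicit count via the congruence $2x\equiv j\Mod{n}$ is in fact slightly more careful than the paper's version, which contains a sign typo in the displayed composition ($i-(j+x)$ should reduce to $(i-j)-x$, not $(i-j)+x$) but reaches the same conclusion.
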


\begin{proof}
Let $L'_n=\{\sigma_0,...,\sigma_{n-1}\}$ be a set of permutations such that $\sigma_i(x) \equiv i+x \Mod{n}.$ 
Now, let $C_{2k+1}$ be a cycle of odd length. and let $K:E(C)\mapsto L_n$ be an edge-labeling of $C_{2k+1}.$ Let $\pi_C=\pi_{2k+1}...\pi_1,$ where $\pi_i$ is the permutation assigned to the $i$-th edge of $C_{2k+1}.$ Then $\pi_C=\pi_{2k+1}\sigma,$ where $\sigma=\pi_{2k}...\pi_1.$ By the above observation, $\sigma\in L_n'.$ For all $x\in [n]$ we have $\pi_{2k+i}(x) \equiv i-x \Mod{n}$ and $\sigma(x) \equiv j+x \Mod{n}$ for some $i,j\in [n].$ Thus $\pi(\sigma(x))=i-(j+x) \equiv (i-j)+x \Mod{n}$ and therefore $\pi_C\in L_n.$ It follows that $C_{2k+1}$ has exactly one vertex-assignment if $n$ is odd and either zero or two if $n$ is even.
\end{proof}

An interesting set of permutations is $L_{n}^{\prime },$ in which for every $%
i\in \lbrack n]$ we have $\sigma (x)_{i}=i+x.$ Here $\sigma ^{-1}\neq \sigma 
$ except for $\sigma _{0}=id,$ so we can only consider those labelings on a
directed graph. A contradiction in a given vertex-labeling $k$ is now a
directed edge $uv$ such that $\sigma (k(u))\neq k(v),$ where $\sigma $ is
the permutation assigned to the edge $uv.$

\begin{obs}
Let $G$ be a directed graph and let $K:E(G)\mapsto L_{n}^{\prime }$ be an
edge-labeling of $G.$ The graph $G$ can have either $0$ or $n$ consistent
vertex-labelings.
\end{obs}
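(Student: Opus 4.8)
The plan is to exploit the group-like structure of $L_n'$: each label $\sigma_i$ commutes with the global shift $x\mapsto x+c \Mod{n}$, which turns the set of consistent vertex-labelings into a free orbit of $\mathbb{Z}_n$ and thereby forces its cardinality to be $0$ or $n$. Throughout I take $G$ to be connected, as in Theorem 5. First I would record the shift-equivariance of the labels: since $\sigma_i(x)\equiv i+x \Mod{n}$, we have $\sigma_i(x+c)\equiv(i+x)+c\equiv\sigma_i(x)+c \Mod{n}$ for every $c\in[n]$. Using this, I claim that if $k\colon V(G)\mapsto[n]$ is consistent and $c\in[n]$, then the shifted labeling $k_c$ defined by $k_c(v)\equiv k(v)+c \Mod{n}$ is also consistent: for any directed edge $uv$ with label $\sigma_i$, consistency of $k$ gives $\sigma_i(k(u))=k(v)$, whence $\sigma_i(k_c(u))=\sigma_i(k(u))+c=k(v)+c=k_c(v)$, so $uv$ is not a contradiction for $k_c$.

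Next I would use this to obtain the lower bound. The $n$ labelings $k_0,k_1,\dots,k_{n-1}$ are pairwise distinct, since $k_c$ and $k_{c'}$ differ at every vertex whenever $c\neq c'$, and by the previous paragraph they are all consistent. Hence, as soon as one consistent labeling exists, there are at least $n$ of them. Equivalently, $\mathbb{Z}_n$ acts freely on the set of consistent labelings by constant shifts, so that set is either empty or a disjoint union of orbits each of size exactly $n$.

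For the matching upper bound I would invoke connectedness together with the bijectivity of the labels. Since each $\sigma_i$ is a bijection, along any directed edge $uv$ the value $k(u)$ determines $k(v)=\sigma_i(k(u))$ and, conversely, $k(u)=\sigma_i^{-1}(k(v))$. Propagating these relations along paths, a consistent labeling of a connected $G$ is completely determined by the value it assigns to a single fixed vertex $v_0$ (the fact already used in the proof of Observation 3); as there are only $n$ choices for $k(v_0)$, there are at most $n$ consistent labelings. Combining the two bounds, the number of consistent vertex-labelings is $0$ when none exists and exactly $n$ otherwise.

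The main obstacle here is really the single conceptual point of recognizing the shift-equivariance $\sigma_i(x+c)=\sigma_i(x)+c$ and observing that it makes the constant-shift action \emph{free}; everything else reduces to the standard connectedness argument. It is worth flagging that connectedness is genuinely needed: for a graph consisting of two isolated vertices one gets $n^2$ consistent labelings, so the statement should be read with $G$ connected, in parallel with Theorem 5.
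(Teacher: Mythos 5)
Your proof is correct, but it takes a genuinely different route from the paper's. The paper argues locally on cycles: it observes that $L_n'$ is closed under composition and inversion, so the holonomy permutation $\pi_{C_t}$ around any (directed) cycle again lies in $L_n'$; since the only element of $L_n'$ with a fixed point is the identity, every cycle is either good (exactly $n$ consistent labelings) or bad (none), and the dichotomy for the whole graph follows because a graph can only be ugly if it contains an ugly cycle. You instead argue globally: the constant-shift action of $\mathbb{Z}_n$ on vertex-labelings preserves consistency because every $\sigma_i$ commutes with shifts, and this action is free, so the set of consistent labelings is either empty or has size at least $n$; connectedness then caps the count at $n$. Your argument avoids the cycle decomposition entirely and directly exhibits the $n$ consistent labelings as one orbit, which is arguably cleaner; the paper's version yields the slightly finer structural fact that each individual cycle is already good or bad (never ugly), which is what gets reused in Observations 6--8 and Theorem 10. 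Both arguments ultimately exploit that $L_n'$ is a regular (hence fixed-point-free away from the identity) abelian permutation group. Your remark that connectedness is genuinely needed is well taken: two disjoint good components give $n^2$ consistent labelings, and the paper's proof silently makes the same assumption, so flagging it is a small improvement rather than a defect.
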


\begin{proof}
A graph $G$ can only be ugly if it contains ugly cycles. Thus, let us consider a cycle $C_t=\{v_1,...,v_t,v_{t+1}=v_1\}\in G$ and let $\sigma_i$ denote the permutation assigned to the $i$-th edge of $C_n.$ Let $\pi_{C_t}=\pi'_t...\pi'_1,$ where $\pi_i'=\pi_i$ if $e_i=v_iv_{i+1}$ and $\pi_i'=\pi_i^{-1}$ if $e_i=v_{i+1}v_i.$ The assignment number of the $C_t$ is equal to the number of fixed points of $\pi_{C_t}.$ 
For any $\sigma_1,\sigma_2\in L_n'$ we have $\sigma_2\sigma_1,\sigma_1^{-1}\in L_n',$ so $\pi_{C_t}\in L_n'.$ The only permutation is $L_n'$ which has a fixed point is the identity. Thus, either $\pi_{C_t}=id$ and $C_t$ is good or $\pi_{C_t}$ has no fixed point and $C_t$ is bad.

\end{proof}

\subsection{Bipartite graphs}

A bipartite graph represents a two-player game. The referee asks each of the
players, who are not allowed to communicate, to assign a value from $[n]$ to
a variable. The players win if their answers satisfy the constraint $%
\pi(a)=b.$ The value of such a game is the maximum probability of winning.

\begin{theorem}
For any edge-labeling $K:E\mapsto L_n$ a bipartite graph $G$ is bad if and
only if it contains a bad chordless cycle.
\end{theorem}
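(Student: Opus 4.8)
The statement is an ``if and only if,'' so I would prove the two implications separately, and I expect the forward direction (bad implies the existence of a bad chordless cycle) to carry all the difficulty.

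For the easy direction, suppose $G$ contains a bad chordless cycle $C$. Any consistent vertex-labeling of $G$ restricts to a consistent vertex-labeling of $C$, so if $C$ admits none, neither does $G$; hence $G$ is bad. Equivalently, this is Observation~3 applied to the connected subgraph $C$, giving $\beta_C'(G,K)\le\beta_C'(C,K)=0$, and if $G$ is disconnected one argues on the component containing $C$. No bipartiteness is needed here.

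For the hard direction I would argue the contrapositive: if every chordless cycle of $G$ is good, then $G$ is good. The first step is to linearize the consistency condition. For an edge $uv$ labeled $\pi_a\in L_n$, consistency of $k$ means $\pi_a(k(u))=k(v)$, i.e. $k(u)+k(v)\equiv a\pmod n$. Fixing a bipartition $A\sqcup B$ and setting $m(v)=k(v)$ on $A$ and $m(v)=-k(v)$ on $B$ turns every edge constraint into a difference constraint $m(u)-m(v)\equiv a\pmod n$ along the orientation from $A$ to $B$. A potential $m$ realizing prescribed edge-differences over $\mathbb{Z}_n$ exists if and only if the signed sum $\mu(C)$ of the prescribed differences vanishes around every cycle $C$; and by the cycle theorem together with the observation that an even cycle is good exactly when its composed permutation equals the identity, $\mu(C)\equiv 0$ is precisely the condition that $C$ is good. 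So it suffices to show that if all chordless cycles are good then all cycles are good, after which the potential $m$, and hence the consistent labeling $k$, exists and $G$ is good.

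The core lemma, which I expect to be the main obstacle, is therefore: in a bipartite graph, $\mu$ vanishing on all chordless cycles forces it to vanish on every cycle. I would prove this by strong induction on cycle length. A shortest cycle is chordless, giving the base case. If $C$ has a chord $d$, then $d$ splits $C$ into two shorter cycles $C_1,C_2$ sharing only the edge $d$, and since $\mu$ is a signed edge-sum the two contributions of $d$ (traversed in opposite directions) cancel, yielding $\mu(C)=\mu(C_1)+\mu(C_2)$. The delicate point is that the induction must stay inside the class of even cycles, since ``good'' and the identity-permutation characterization are available only there; here bipartiteness saves us, because a chord joins vertices in opposite parts, so both $C_1$ and $C_2$ are again even and strictly shorter than $C$. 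By the inductive hypothesis $\mu(C_1)=\mu(C_2)=0$, hence $\mu(C)=0$, which completes the lemma and with it the theorem.
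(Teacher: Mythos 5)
Your proof is correct, and its combinatorial core is the same as the paper's: split a chorded cycle along the chord into two strictly shorter cycles, observe that bipartiteness forces both pieces to be even (so the ``good iff composed permutation is the identity'' dichotomy of the even-cycle observation still applies), and induct on length. Where you differ is in the surrounding formalism. The paper works directly with permutation composition: it shows that if both sub-cycles compose to the identity then so does the original cycle (since each $\pi_i\in L_n$ is an involution, the chord's label cancels), and it simply asserts the reduction ``$G$ is bad iff it contains a bad cycle.'' You instead linearize the $L_n$-constraints $k(u)+k(v)\equiv a\pmod n$ into a $\mathbb{Z}_n$-valued difference (tension) problem via the sign flip on one side of the bipartition, so that ``cycle is good'' becomes ``signed edge-sum $\mu(C)$ vanishes'' and the chord-splitting identity becomes the additive $\mu(C)=\mu(C_1)+\mu(C_2)$. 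This buys you two things the paper leaves implicit: a clean, explicitly abelian bookkeeping for the induction, and an actual proof (via existence of a potential realizing prescribed cycle-vanishing differences) that if every cycle is good then a consistent labeling of $G$ exists -- the step the paper covers only with ``Thus, $G$ is bad if and only if it contains a bad cycle.'' The cost is the extra translation layer, which is specific to the abelian structure of $L_n$ and would not carry over to general permutation labels, but neither does the paper's argument.
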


\begin{proof}
Let $G$ be a bipartite graph. Obviously, if $G$ has no cycles, it has to be good. By Observation $6$, the graph $G$ contains no ugly cycles. Thus, $G$ is bad if and only if it contains a bad cycle. We will now prove that if $G$ contains a bad cycle, it must contain a bad chordless cycle.
Now, let $C_n=v_1...v_n$ for some $n\geq 6$ be a bad cycle in $G.$ Let $e_i=v_iv_{i+1}$ for $i\in\{1,...,2k-1\}$, $e_{2k}=v_{2k}v_1$ and $K(e_i)=\pi_i.$ Assume that an edge $e_{n+1}=v_1v_k$ exists, for some $k\in\{4,6,...,n-2\}.$ Let $\pi_{n+1}=K(e_{n+1}).$ Suppose both $v_1v_2...v_k$ and $v_kv_{k+1}...v_1$ are good cycles. Then $\pi_{n+1}\pi_k\pi_{k-1}...\pi_1=id$ and $\pi_{n+1}\pi_n...\pi_{k+1}=id.$ It follows that $\pi_{n+1}=\pi_n...\pi_{k+1}$ and therefore $\pi_n...\pi_{1}=id,$ a contradiction.
\end{proof}

In the complete bipartite graph $K_{s,t}$ all chordless cycles have length $4
$ and hence we have the following:

\begin{cor}
For any edge-labeling $K:E\mapsto L_{n}$, a complete bipartite graph $K_{s,t}
$ is bad if and only if it contains a bad $4$-cycle.
\end{cor}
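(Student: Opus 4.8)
The plan is to specialize the just-proven Theorem (call it the chordless-cycle criterion) to the complete bipartite graph $K_{s,t}$ and exploit the fact that $K_{s,t}$ has a particularly simple chordless-cycle structure. First I would recall the structural claim that in $K_{s,t}$ \emph{every} chordless cycle has length exactly $4$. This is the geometric heart of the corollary, and it is where I expect the only real content to sit, so I would verify it carefully: any cycle in a bipartite graph has even length, and in $K_{s,t}$ a cycle of length $6$ or more, say $v_1 v_2 \cdots v_{2m} v_1$ with $m \geq 3$, alternates between the two sides $A$ (size $s$) and $B$ (size $t$); then $v_1$ and $v_4$ lie on the same side while $v_3$ lies on the opposite side, so the pair $v_1, v_3$ (or another suitable non-consecutive pair lying on opposite sides) is joined by an edge of $K_{s,t}$ that is not an edge of the cycle, i.e. a chord. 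Hence no chordless cycle of length $\geq 6$ exists, and since chordless cycles have length $\geq 4$ and even length, they must all be $4$-cycles.

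Once that structural fact is in hand, the corollary follows immediately by quoting the preceding theorem. The plan is to argue both directions of the biconditional. For the forward direction, if $K_{s,t}$ is bad, then by the theorem (applied to the bipartite graph $K_{s,t}$ with labeling $K \colon E \mapsto L_n$) it contains a bad chordless cycle; since every chordless cycle in $K_{s,t}$ is a $4$-cycle, this bad chordless cycle is a bad $4$-cycle. For the converse, if $K_{s,t}$ contains a bad $4$-cycle, then it contains a bad cycle, and by Observation~6 every cycle under a labeling with $L_n$ is either good or bad (never ugly), so a bad cycle forces $K_{s,t}$ to fail to be good; invoking the theorem (or simply Observation~3 / the subgraph monotonicity of the assignment number) shows $K_{s,t}$ cannot have a consistent vertex-labeling and is therefore bad.

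I expect the main obstacle to be nothing more than pinning down the chord-existence argument cleanly, since the rest is a direct appeal to the theorem just proved. The one point demanding care is the parity bookkeeping: I must make sure that for a cycle of length $\geq 6$ I select two cycle-vertices that are non-adjacent \emph{on the cycle} but lie on \emph{opposite} sides of the bipartition, so that the edge between them genuinely belongs to $K_{s,t}$ and is a genuine chord. Concretely, in the alternating cycle $v_1 v_2 \cdots v_{2m}$ with $v_1 \in A$, the vertices on side $B$ are $v_2, v_4, \dots$; choosing $v_1 \in A$ and $v_4 \in B$ works precisely when $v_1 v_4 \notin E(C)$, which holds once $2m \geq 6$. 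I would therefore state the length claim as a short lemma-style remark inside the proof and then close with the two-line biconditional argument above.
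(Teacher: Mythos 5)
Your proof is correct, and it takes a cleaner, more deductive route than the one the paper actually writes out. The paper's displayed proof of this corollary does not invoke the preceding theorem at all: it is a self-contained induction on cycle length, showing that if every $4$-cycle is good then every $6$-cycle is good (by inserting the chord $v_1v_4$, splitting the $6$-cycle into two $4$-cycles whose labels compose to the identity), and then repeating the same chord-splitting step to pass from $2k-2$ to $2k$. In effect the paper re-runs, inside $K_{s,t}$, the same argument it just used to prove the chordless-cycle theorem. You instead quote that theorem as a black box and supply the one missing structural fact, namely that every chordless cycle of $K_{s,t}$ is a $4$-cycle; this is exactly the derivation the paper advertises in the sentence immediately before the corollary (``all chordless cycles have length $4$ and hence\ldots''), so your version is arguably the intended one. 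What your approach buys is brevity and no duplicated induction; what the paper's approach buys is independence from the theorem (whose own proof is only sketched for the inductive step). One small caution: in your first description of the chord you assert that $v_1$ and $v_4$ lie on the same side of the bipartition and $v_3$ on the opposite side, which is backwards ($v_1$ and $v_3$ share a side, $v_4$ is on the other side); you correct this in your final paragraph, where $v_1\in A$, $v_4\in B$, and $v_1v_4$ is a genuine chord once the cycle has length at least $6$, so the argument stands, but the earlier sentence should be fixed. Your converse direction, via Observation~3 (a consistent labeling of $K_{s,t}$ restricts to a consistent labeling of any subgraph, so a bad $4$-cycle forces $\beta_C'(K_{s,t},K)=0$), is sound.
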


\begin{proof}
It follows from Observation 6 that bipartite graphs contain no ugly cycles. Thus $K_{s,t}$ is bad if and only if it contains at least one bad cycle. First, we will prove that if $K_{s,t}$ contains a bad $6$-cycle, it must also contain a bad $4$-cycle. 
Let $v_1v_2v_3v_4v_5v_6$ be a cycle of length $6,$ where $e_i=v_iv_i+1$ for $i\in\{1,...,5\}$, $e_6=v_6v_1$ and $K(e_i)=\pi_i.$ Since $K_{s,t}$ is a complete bipartite graph, an edge $e_7=v_1v_4$ exists. Let $K(e_7)=\pi_7.$
Suppose all $4$-cycles in $K_{s,t}$ are good. Then $\pi_7\pi_3\pi_2\pi_1=id$ and $\pi_7^{-1}\pi_6\pi_5\pi_4=id.$ It follows that $\pi_6\pi_5\pi_4=\pi_7.$ Therefore $\pi_6\pi_5\pi_4\pi_3\pi_2\pi_1=id$ and the $6$-cycle $v_1v_2v_3v_4v_5v_6$ is good.
Thus a bad $6$-cycle can only exist if there is a bad 4-cycle. 
Now, let $v_1...v_{2k}$ be a cycle, where $k>3.$ Let $e_i=v_iv_{i+1}$ for $i\in\{1,...,2k-1\}$, $e_{2k}=v_{2k}v_1$ and $K(e_i)=\pi_i.$ Assume that a bad $2k-2$-cycle can only exist if a bad $4$-cycle exists. Let $e_{2k+1}=v_1v_4$ and $K(e_{2k+1})=\pi_{2k+1}.$
If all $4$-cycles in the graph are good, then $\pi_{2k+1}\pi_3\pi_2\pi_1=id$ and $\pi_{2k+1}^{-1}\pi_{2k}\pi_{2k-1}...\pi_4=id.$ It follows that $\pi_{2k}\pi_{2k-1}...\pi_4=\pi_{2k+1}$ and therefore $\pi_{2k}...\pi_1=id.$ Thus, the $2k$-cycle must be good.
\end{proof}

\subsection{Other graphs}

Every non-bipartite graph contains at least one odd cycle. It follows, that
for any edge-labeling $K:E\mapsto L_n$ where $n\geq 3$ a non-bipartite graph
cannot be good.

\begin{obs}
Let $G$ be a non-bipartite graph and let $K:E\mapsto L_{n}$ for some $n\geq 3
$ be an edge-labeling of $G$. If $n$ is odd then $\beta _{C}^{\prime
}(G,K)\leq 1$. If $n$ is even then $\beta _{C}^{\prime }(G,K)\leq 2.$
\end{obs}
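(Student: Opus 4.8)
The plan is to reduce the count for the whole graph to the count for a single odd cycle, which is already controlled by Observation~6. Since $G$ is non-bipartite it contains at least one cycle $C$ of odd length; I take the inherited edge-labeling $K_C$ with $K_C(e)=K(e)$ for every $e\in E(C)$. The engine of the argument is the subgraph monotonicity of the assignment number: for a connected ambient graph, passing to a subgraph can only increase the number of consistent vertex-labelings. Applying this (Observation~3) with $C$ as a subgraph of $G$ gives
\begin{equation*}
\beta_C'(G,K)\leq\beta_C'(C,K_C),
\end{equation*}
so it suffices to bound the right-hand side.

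Next I would quote the sharp form of Observation~6. Its proof shows that the product permutation $\pi_C$ of an odd cycle labeled from $L_n$ again lies in $L_n$, and since a member of $L_n$ has exactly one fixed point when $n$ is odd and either zero or two fixed points when $n$ is even, the odd cycle $C$ has exactly one consistent vertex-labeling for odd $n$ and at most two for even $n$. Feeding $\beta_C'(C,K_C)\leq 1$ (respectively $\leq 2$) into the displayed inequality gives $\beta_C'(G,K)\leq 1$ for odd $n$ and $\beta_C'(G,K)\leq 2$ for even $n$, which is the assertion.

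The only step demanding care is the monotonicity inequality, because Observation~3 presupposes a connected ambient graph, so I would state at the outset that $G$ is taken to be connected. The reason the inequality holds is that in a connected graph a consistent vertex-labeling is determined by its value at any single vertex; hence restricting a consistent labeling of $G$ to $V(C)$ yields a consistent labeling of $C$, and this restriction map is injective on the set of consistent labelings of $G$. Checking that this injectivity is exactly what underlies Observation~3 --- and therefore that the cycle count passes intact to all of $G$ --- is the heart of the matter; once it is in place, the result is immediate from the two cited observations.
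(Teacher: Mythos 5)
Your proof is correct and follows exactly the paper's own argument: restrict to an odd cycle, apply the subgraph monotonicity of the assignment number (Observation~3), and invoke the fixed-point count for odd cycles labeled from $L_n$ (Observation~6). Your explicit remark that $G$ must be taken connected for the monotonicity step is in fact more careful than the paper, whose statement and proof silently assume this (a disjoint union of an odd cycle with, say, a single edge labeled by the identity would have $n$ consistent labelings and so violate the stated bound).
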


\begin{proof}
If $H$ is a subgraph of $G$ and $K'$ is an edge-labeling of $H$ such that $K'(e)=K(e),$ then $\beta_C'(G,K)\leq\beta_C'(H,K').$ By Observation 5 the assignment number of an odd cycle is at most $2$ for an even $n$ and $1$ for an odd $n.$ Thus, the assignment number of any graph containing an odd cycle must be at most $2$ for even $n$ and at most $1$ for odd $n.$
\end{proof}

It follows that the assignment number of a graph labeled with $L_{n}$ has
to be either $0,1,2$ or $n$.

\bigskip

\textbf{Acknowledgments.} The authors would like to thank David Roberson for
pointing out links with topological graph theory and Markos Karasamanis. This work has been
supported by the Royal Society and EPSRC. Part of this work has been carried out while the authors were at the Isaac Newton Institute for Mathematical Science, during the thematic programme \textquotedblleft Mathematical Challenges in Quantum Information\textquotedblright.

\end{document}